\documentclass[nolayout]{article}

\usepackage{authblk}
\usepackage[utf8]{inputenc}
\usepackage[english]{babel}
\usepackage{amsmath,amsthm}
\usepackage{geometry}
\usepackage{tikz}
\usetikzlibrary{fit,positioning}
\usepackage[textwidth=4cm,textsize=footnotesize]{todonotes}
\usepackage{fancyhdr}
\pagestyle{fancy}
\usepackage{amssymb,color,bbm,xargs}
\usepackage{graphicx}
\usepackage[active]{srcltx}
\usepackage{ifthen}
\usepackage{enumerate}
\usepackage{dsfont}
\usepackage{subcaption}
\usepackage{hyperref}
\usepackage{appendix}
\hypersetup{
    bookmarks=true,         
    colorlinks=true,       
    linkcolor=red,          
    citecolor=green,        
    filecolor=magenta,      
    urlcolor=cyan           
}

\newtheorem{lemma}{Lemma}
\newtheorem{proposition}[lemma]{Proposition}
\newtheorem{theorem}[lemma]{Theorem}

\newtheorem{Example}{Example}

\definecolor{lavander}{cmyk}{0,0.48,0,0}
\definecolor{violet}{cmyk}{0.79,0.88,0,0}
\definecolor{burntorange}{cmyk}{0,0.52,1,0}
\definecolor{burntgreen}{cmyk}{0.62,0.44,0.47,0}
\definecolor{burntblue}{cmyk}{0.86,0.30,0.18,0}
\definecolor{palegreen}{cmyk}{0.86,0.30,0.96,0}

\def\sup{\mathrm{sup}}

\def\1{\mathds{1}}
\def\rset{\mathbb{R}}
\def\rmd{\mathrm{d}}
\def\eqsp{\,}

\def\bayes{\pi_{\star}}

\newcommand{\limit}[1]{\underset{#1\to \infty}{\longrightarrow}}
\newcommand{\E}{\mathbb{E}}
\newcommand{\kullback}{\mathsf{L}}
\newcommand{\link}{\leftrightarrow}
\newcommand{\un}[1]{\mathds{1}_{#1}}
\newcommand{\nobs}{{\mathsf n}}

\newcommand{\pa}[1]{\left(#1\right)}
\newcommand{\cro}[1]{\left[#1\right]}
\newcommand{\absj}[1]{\left|#1\right|}
\newcommand{\norm}[1]{\left\|#1\right\|}
\newcommand{\set}[1]{\left\{#1\right\}}
\newcommand{\PE}[1]{\left\lfloor#1\right\rfloor}

\newcommand{\likelihood}[2]{\mathbb{P}^{#1}_{#2}}

\newcommand{\Lo}[2]{\ell^{#1}\pa{#2}}

\newcommand{\bN}{\mathbb{N}}
\newcommand{\bP}{\mathbb{P}}
\newcommand{\bbP}{\mathbf{P}}

\newcommand{\bZ}{\mathbb{Z}}

\newcommand{\cA}{\mathcal{A}}

\newcommand{\cE}{\mathcal{E}}
\newcommand{\cO}{\mathcal{O}}
\newcommand{\cP}{\mathcal{P}}

\newcommand{\cV}{\mathcal{V}}
\newcommand{\cX}{\mathcal{X}}
\newcommand{\cY}{\mathcal{Y}}

\newcommand{\Vset}{\mathbb{V}}
\newcommand{\Xset}{\mathbb{X}}

\newcommand{\ordermax}[2]{{\mathsf{q}}^{#1}_{#2}}
\newcommand{\remainder}[2]{{\mathsf{r}}^{#1}_{#2}}

\newcommand{\condlik}{k}

\DeclareMathOperator*{\argmax}{argmax}
\DeclareMathOperator*{\argmin}{argmin}

\newcommand{\shift}{\vartheta}
\newcommand{\MLE}{\widehat{\pi}}

\newcounter{hypH}
\newenvironment{hypH}{\refstepcounter{hypH}\begin{itemize}
\item[{\bf H\arabic{hypH}}]}{\end{itemize}}

\begin{document}

\title{Learning the distribution of latent variables in paired comparison models with round-robin scheduling}
\date{}

\author[*]{Roland Diel}
\author[$\dag$]{Sylvain Le Corff}
\author[$\ddag$]{Matthieu Lerasle}

\affil[*]{\small{Laboratoire J.A.Dieudonn\'e, UMR CNRS-UNS 6621, Universit\'e de Nice Sophia-Antipolis}}
\affil[$\dag$]{\small{Samovar, T\'el\'ecom SudParis, D\'epartement CITI, TIPIC, Institut Polytechnique de Paris}}
\affil[$\ddag$]{\small{CNRS, ENSAE, CREST, Institut Polytechnique de Paris}}

\lhead{R. Diel et al.}
\rhead{Learning the distribution of latent variables in paired comparison models}

\maketitle

\begin{abstract}
Paired comparison data considered in this paper originate from the comparison of a large number $N$ of individuals in couples. 
The dataset is a collection of results of contests between two individuals when each of them has faced $n$ opponents, where $n\ll N$.  Individual are represented by independent and identically distributed random parameters characterizing their abilities.
The paper studies the maximum likelihood estimator of the parameters distribution.  The analysis relies on the construction of a graphical model encoding conditional dependencies of the observations which are the outcomes of the first $n$ contests each individual is involved in. 
This graphical model allows to prove geometric loss of memory properties and deduce the asymptotic behavior of the likelihood function. This paper sets the focus on graphical models obtained from round-robin scheduling of these contests.
Following a classical construction in learning theory, the asymptotic likelihood is used to measure performance of the maximum likelihood estimator. 
Risk bounds for this estimator are finally obtained by sub-Gaussian deviation results for Markov chains applied to the graphical model.
\end{abstract}

\paragraph{MSC 2010 subject classifications:} Primary 62G05; secondary 05C80.

\paragraph{Keywords:} Paired comparison data; nonparametric estimation; nonasymptotic risk bounds; latent variables.

\section{Introduction}
Consider a paired comparison problem involving a large number $N$ of individuals. 
For all $1\leqslant i\leqslant N$, the $i$-th individual is characterized by a {\em strength} (or {\em ability}) represented by an unknown parameter $V_i$. These parameters are indirectly observed through discrete valued scores $X_{i,j}$ describing the results of contests between individuals $i$ and $j$.
Given the values $V=(V_1,\ldots,V_N)$, the random variables $X_{i,j}$ are assumed to be independent and for each $i$ and $j$, the conditional distribution of $X_{i,j}$ given $V$ depends only on $V_i$ and $V_j$: there is a known function $k$ such that, for all $ 1\leqslant i<j\leqslant N$,
\[
\bP\pa{X_{i,j}=x|V}=k(x,V_i,V_j)\eqsp.
\]
The most classical example is the Bradley-Terry model \cite{ bradley:terry:1952,zemerlo:1929} where $x\in\{0,1\}$ and $k(1,V_i,V_j)=V_{i}/(V_i+V_j)$. 
In the seminal works \cite{ bradley:terry:1952,zemerlo:1929}, the problem was to recover the strengths $(V_1,\ldots,V_N)$ of a small number of players when the number of observed scores for each pair grows to infinity, see \cite{David:1963} for a review of these results in the original Bradley-Terry model and some of its extensions. 
More recently, \cite{Simons_Yao:1999} considered the problem of estimating each strength based on one score per pair in a tournament where the number $N$ of players grows to infinity. 
This framework led to several developments in computational statistics for the Bradley-Terry model, see \cite{Hun:2004} and \cite{caron:doucet:2012} for various extensions of this original model. 
The related Chen-Lu model was considered in \cite{chatterjee:diaconis:sly:2011} where the observations take values in $\{0,1\}$ and where the function $k$ is given by $k(1,V_i,V_j)=V_iV_j/(1+V_iV_j)$. Using one observation per pair of nodes, it is proved in \cite{chatterjee:diaconis:sly:2011} that, with probability asymptotically larger than $1-1/N^2$, there exists a unique maximum likelihood estimator of the nodes strengths which is such that the supremum norm of the estimation error is upper bounded by $\sqrt{\log N/N}$.

Consider the random oriented graph $G=(\{1,\ldots,N\},E)$, where an edge is drawn from $i$ to $j$ in $E$ if $X_{i,j}=1$ when $i<j$ and if $X_{j,i}=0$ when $i>j$. It is known since \cite{zemerlo:1929} that a necessary and sufficient condition for the existence of the maximum likelihood estimator (MLE) of $(V_1,\ldots,V_N)$ in the Bradley-Terry model is that $G$ is connected, i.e. there is a path between every pair of nodes.  This assumption implies some restrictions on the ratio between the strongest and the weakest strength \cite{Simons_Yao:1999}.  This prevents the use of maximum likelihood estimation in a sparse setting where the objective is to predict the outcome of future comparisons based on few observations.  This problem was for instance considered in \cite{YanYangXu:2011} which analyzes the MLE of $(V_1,\ldots,V_N)$ under the condition of existence of \cite{zemerlo:1929}, but in a graph where some edges may be unobserved.

This paper sets the focus on the case where each individual is compared to $n$ others, with possibly $n\ll N$ in such a way that the assumption of \cite{zemerlo:1929} may not hold. 
In other words, \emph{the MLE of $V_1,\ldots,V_N$ may not exist} in this setting.
To the best of our knowledge, this kind of dataset has not been analyzed previously and it is not clear what quantities can be recovered from these observations. Our strategy is motivated by the \textit{Bradley-Terry model in random environment} \cite{Sir_Red:2009,CheDielLer:2017}. 
In this model, strengths are supposed to be realizations of independent and identically distributed random variables with common distribution $\bayes$. 
The paper \cite{Sir_Red:2009} illustrated for example that an elementary parametric model for the strength can be used to make predictions regarding the teams scores at the end of baseball tournaments. 
The paper \cite{CheDielLer:2017} recently proved that the player with maximal strength ends the tournament with the highest degree in the graph $G$ if the tail of the nodes weights distribution is sufficiently convex. 

The take-home message is that \emph{the strengths distribution $\bayes$ is relevant to predict future outcomes} which motivates  the estimation of $\bayes$.
As every player is supposed to meet exactly $n$ opponents, the observed graph is naturally $n$ regular (every node has the same degree $n$). It is also assumed  that players meet according to the round-robin scheduling (see Section~\ref{sec:setting} for a description of this algorithm), a famous method to build  $n$-regular graphs recursively. The round-robin algorithm is routinely used for example to manage scheduling in chess, bridge, sport and online gaming tournaments. The MLE of $\bayes$ is analyzed based on the observation of the scores of every contest of the first $n$ rounds of the algorithm.  

First, a graphical model encoding conditional dependencies between strengths and scores is built.
This representation allows to approximate the likelihood function using a stationary hidden Markov model \cite{cappe:moulines:ryden:2005}. The asymptotic behavior of the normalized loglikelihood is analyzed using loss of memory properties of the hidden Markov process, following essentially the approach of \cite{DoucMoulines}. 
Then, following \cite{MR1641250}, the limit of the normalized loglikelihood is used to define a risk function, see Section~\ref{sec:ConvLikelihood} for details on this construction. This risk is then bounded from above for finite values of the number $N$ of nodes using concentration inequalities for Markov Chains \cite{dedecker:gouezel:2015}.
The excess risk scales as Dudley's entropy of the underlying statistical model normalized by a term of order $\sqrt{N}$ when $n$ is fixed and $N\to\infty$. 
From a learning perspective, Dudley's entropy bound is known to be suboptimal in general, it can be replaced by a majorizing measure bound \cite{Talagrand:2014}  since it derives from a sub-Gaussian concentration inequality for the increments of the underlying process, see~\eqref{eq:subGaussIncr}. 

More generally, the methodology introduced in this paper leads the way to various research perspectives in several fields. 
For example, identifiability of nonparametric hidden Markov models with finite state spaces was established recently along with the first convergence properties of estimators of the unknown distributions, see  \cite{decastro:gassiat:lacour:2016} for a penalized least-squares estimator of the emission densities,  \cite{decastro:gassiat:lecorff:2017,vernet:2015,vernet:2017} for consistent estimation of the posterior distributions of the states and posterior concentration rates for the parameters  or \cite{lehericy:2017} for order estimation.
However, very few theoretical results are available for the nonparametric estimation of general state spaces hidden Markov models. In computational statistics, Bayesian estimators of the strengths have been studied in Bradley-Terry models \cite{Hun:2004} and other extensions, see for example \cite{caron:doucet:2012}. In \cite{lecorff:lerasle:vernet:2018}, the unknown distribution of hidden variables is analyzed in a Bayesian framework and contraction rates of the posterior distribution are obtained using the concentration inequality established in this paper. Designing new algorithms to compute the MLE of the prior would then be of great interest to derive empirical Bayes estimators \cite{MR0084919, MR2724758}. 

The paper is organized as follows. 
Section~\ref{sec:setting} details the model, the maximum likelihood estimator of  the strengths distribution and the round-robin algorithm.
Section~\ref{sec:CondDep} presents preliminary results. The graphical model encoding conditional dependencies in round-robin graphs with latent variables is displayed, and the Markov chain associated with this representation is shown to be well approximated by a geometrically ergodic Markov chain. The main results are gathered in Section~\ref{sec:main}: convergence of the likelihood is established when the number $N$ of nodes grows to $+\infty$ and risk bounds for the MLE are provided. 
Finally, Appendices~\ref{sec:round:robin} to \ref{sec:MainProofs} are devoted to the proofs of these results. 

\section{Setting}
\label{sec:setting}
\subsection*{Graphs with latent variables}
Let $N$ be a positive integer, $E$ a set of couples $(i,j)$ with $1\leqslant i< j\leqslant N$ and $G=(\{1,\ldots,N\},E)$ the corresponding oriented graph. 
Let $V_1,\ldots,V_N$ denote independent and identically distributed (i.i.d.) random variables taking values in a measurable set $\cV$ with common \textit{unknown} distribution $\bayes$.  
For all $(i,j)\in E$, let $X_{i,j}$ denote a random variable taking values in a finite set $\cX$ such that, conditionally on $V=(V_1,\ldots,V_N)$, the random variables $(X_{i,j})_{(i,j)\in E}$ are independent with conditional distributions given by 
\begin{equation*}
\bP(X_{i,j}=x|V)=\condlik(x,V_i,V_j)\eqsp,
\end{equation*}
where $\condlik: \cX\times\cV\times\cV\to[0,1]$  is a known function.
In the following, the sets $\cX, \cV$ and the scores $(X_{i,j})_{(i,j)\in E}$  are available while the vector $V$ is unknown and the objective is to estimate the distribution $\bayes$. The following examples of triplets $(\cX, \cV, k)$ have been considered in the literature.

\begin{Example}[Bradley-Terry model \cite{bradley:terry:1952}]
\label{ex:bt}
In this example, $\cV=(0,\infty)$, $\cX=\{0,1\}$ and for all $x\in\cX$,
\[
k(x,V_i,V_j) = \left(\frac{V_i}{V_i+V_j}\right)^x\left(\frac{V_j}{V_i+V_j}\right)^{1-x}\eqsp.
\]
\end{Example}
\begin{Example}[Extensions of Bradley-Terry model \cite{caron:doucet:2012}] 
In the following examples, $\cV=(0,\infty)$.
\begin{enumerate}[-]
\item Let $\theta>0$ and $\cX=\{0,1\}$. In the Bradley-Terry model with home advantage, if $i$ is home, for all $x\in\cX$,
\[
k(x,V_i,V_j)= \left(\frac{\theta V_i}{\theta V_i+V_j}\right)^x\left(\frac{V_j}{\theta V_i+V_j}\right)^{1-x}\eqsp.
\]
\item In the Bradley-Terry model with ties \cite{rao:kupper:1967}, $\cX=\{-1,0,1\}$ and
\[
k(1,V_i,V_j)= \frac{V_i}{V_i+ \theta V_j}\quad\mbox{and}\quad k(0,V_i,V_j)= \frac{(\theta^2-1) V_i V_j}{\left(\theta V_i+V_j\right)\left( V_i+\theta V_j\right)}\eqsp.
\]
\end{enumerate}
\end{Example}

\begin{Example}[Graphon model]
The probability that two nodes $i$ and $j$ are connected in the graphon model (i.e.  $(i,j)\in E$) is the random variable $\mathsf{W}(V_i,V_j)$ with $\mathsf{W}: \cV\times\cV \to [0,1]$ and $\cV\subset \rset^+$. In the context of this paper, this boils down to choosing $\cX=\{0,1\}$ and setting by convention $X_{i,j}=0$ if and only if $(i,j)\notin E$ with 
\[
k(x,V_i,V_j) = \mathsf{W}(V_i,V_j)^x\left(1 - \mathsf{W}(V_i,V_j)\right)^{1-x}\eqsp.
\]
The problem in the graphon model  is to estimate the matrix of connection probabilities $(\mathsf{W}(V_i,V_j))_{1\leqslant i,j\leqslant N}$ using the observations of the adjacency matrix, and assuming that the distribution of $V_i$ is given.
 
In our setting, the aim is different, we try to estimate $\bayes$, the law of
the latent variables, from a partial observation $E$ of the adjacency matrix and with a known function $\mathsf{W}$.
\end{Example}

\begin{Example}[Chen-Lu model]
Consider a random graph where $E$ is such that an edge is drawn between node $i$ and node $j$ (i.e. $(i,j)\in E$) with probability $V_iV_j/(1+V_iV_j)$, with for all $1\leqslant k\leqslant N$, $V_k\in\cV=(0,\infty)$. In the context of this paper, this boils down to choosing $\cX=\{0,1\}$ and setting by convention $X_{i,j}=0$ if and only if $(i,j)\notin E$ with 
\[
k(x,V_i,V_j) = \left(\frac{V_iV_j}{1+V_iV_j}\right)^x\left(\frac{1}{1+V_iV_j}\right)^{1-x}\eqsp.
\]
\end{Example}
\subsection*{Maximum likelihood estimator}
The aim of this paper is to estimate the distribution $\bayes$ of the hidden variables $V=(V_1,\ldots,V_N)$ from the observations $X^E=(X_{i,j})_{(i,j)\in E}$. Let $\cA$ be a $\sigma$-field on $\cV$ and $\Pi$ be a set of probability measures on $(\cV,\cA)$. The statistical model is not assumed to be well specified i.e. $\Pi$ may not contain $\bayes$. For all $\pi\in\Pi$, the joint distribution of $(X^E,V)$ is given, for any $x^{E}\in \cX^{|E|}$ and all $A\in\cA^{\otimes N}$ by
\begin{equation}
\label{eq:jointlaw}
\likelihood{E}{\pi}(X^E=x^E,V\in A)=\int\un{A}(v)\prod_{(i,j) \in E}k(x^{E}_{i,j},v_{i},v_j)\pi^{\otimes N}(\rmd v)\eqsp,
\end{equation}
where $\un{A}$ is the indicator function of the set $A$. Using the convention $\log 0=-\infty$, the log-likelihood is given, for all $\pi\in\Pi$, by
\[
\Lo{E}{\pi} = \log\,\likelihood{E}{\pi}(X^E)\eqsp\qquad\text{where}\qquad  \likelihood{E}{\pi}(X^E)=\likelihood{E}{\pi}(X^E,V\in\cV^N)\eqsp.
\]
In this paper, $\bayes$ is estimated by the maximum likelihood estimator $\MLE^{E}$ defined as any maximizer of the log-likelihood:
\[
\MLE^{E}\in \argmax_{\pi\in\Pi}\{\Lo{E}{\pi}\}\eqsp.
\]

\subsection*{Round-robin (RR) Scheduling}\label{sec:RRscheduling}
Assume that  $N$ is an even integer. In the case of a round-robin scheduling, at $t=1$, $2i-1$ is paired with $2i$, for all $i\in[N/2]$, as in Figure~\ref{fig:robin:day1}. At $t=2$, the RR permutation $\cP_{\text{RR}}$ is performed: node $1$ is fixed $\cP_{\text{RR}}(1)=1$, $\cP_{\text{RR}}(2)=3$, each odd integer $2i-1<N-1$ satisfies $\cP_{\text{RR}}(2i-1)=2i+1$, $\cP_{\text{RR}}(N-1)=N$ and each even integer $2i>2$ satisfies $\cP_{\text{RR}}(2i)=2(i-1)$. This permutation is illustrated by the graphical representation given in Figure~\ref{fig:robin:move}. Then, the RR pairing is performed as in Figure~\ref{fig:robin:day2}. At each time $t> 2$, a RR permutation is performed as in Figure~\ref{fig:robin:move} and followed by a RR pairing. Let $n\geqslant 1$ denote an integer. The RR graph denoted by $E^{n,N}_{{\rm RR}}$ studied in detail in this paper contains all pairs collected in the first $n$ pairings of the RR algorithm. Note that $E^{N-1,N}_{{\rm RR}}$ is the complete graph and that we focus on situations where $n\ll N$.

\begin{figure}[h!]
\begin{subfigure}{\textwidth}
  \centering
  \begin{tikzpicture}
\tikzstyle{main}=[circle, minimum size = 10mm, thick, draw =black!80, node distance = 6mm]
\tikzstyle{connect}=[-latex, thick]
\tikzstyle{box}=[rectangle, draw=black!100]
\node[main, fill = black!10] (v1){$\scriptstyle 1$ };
\node[main] (v3) [right=of v1] {$\scriptstyle 3$ };
\node[main] (v5) [right=of v3] {$\scriptstyle 5$ };
\node[main] (v7) [right=of v5] {$\scriptstyle {{2i-1}}$};
\node[main] (v9) [right=of v7] {$\scriptstyle{{N-3}}$};
\node[main] (v11) [right=of v9] {$\scriptstyle{{N-1}}$};
\node[main] (v2) [below=of v1] {$\scriptstyle 2$};
\node[main] (v4) [below=of v3] {$\scriptstyle 4$};
\node[main] (v6) [below=of v5] {$\scriptstyle 6$};
\node[main] (v8) [below=of v7] {$\scriptstyle{{2i}}$};
\node[main] (v10) [below=of v9] {$\scriptstyle{{N-2}}$};
\node[main] (v12) [below=of v11] {$\scriptstyle{{N}}$};
\draw (v1) -- (v2);
\draw (v3) -- (v4);
\draw (v5) -- (v6);
\draw (v7) -- (v8);
\draw (v9) -- (v10);  
\draw (v11) -- (v12);     
\path (v5) -- node[auto=false]{\ldots}  (v7); 
\path (v7) -- node[auto=false]{\ldots}  (v9); 
\path (v6) -- node[auto=false]{\ldots}  (v8); 
\path (v8) -- node[auto=false]{\ldots}  (v10);    
\end{tikzpicture}
\caption{Round-robin pairing, step $1$.}
\label{fig:robin:day1}
\end{subfigure}

\vspace{.7cm}

\begin{subfigure}{\textwidth}
  \centering
  \begin{tikzpicture}
\tikzstyle{main}=[circle, minimum size = 10mm, thick, draw =black!80, node distance = 6mm]
\tikzstyle{connect}=[-latex, thick]
\tikzstyle{box}=[rectangle, draw=black!100]
\node[main, fill = black!10] (v1){$\scriptstyle 1$ };
\node[main] (v3) [right=of v1] {$\scriptstyle 3$ };
\node[main] (v5) [right=of v3] {$\scriptstyle 5$ };
\node[main] (v7) [right=of v5] {$\scriptstyle{{2i-1}}$};
\node[main] (v9) [right=of v7] {$\scriptstyle{{N-3}}$};
\node[main] (v11) [right=of v9] {$\scriptstyle{{N-1}}$};
\node[main] (v2) [below=of v1] {$\scriptstyle 2$};
\node[main] (v4) [below=of v3] {$\scriptstyle 4$};
\node[main] (v6) [below=of v5] {$\scriptstyle 6$};
\node[main] (v8) [below=of v7] {$\scriptstyle{{2i}}$};
\node[main] (v10) [below=of v9] {$\scriptstyle{{N-2}}$};
\node[main] (v12) [below=of v11] {$\scriptstyle{{N}}$};
\path (v2) edge [connect] (v3);   
\path (v4) edge [connect] (v2);  
\path (v3) edge [connect] (v5);  
\path (v6) edge [connect] (v4); 
\path (v9) edge [connect] (v11); 
\path (v11) edge [connect] (v12);   
\path (v12) edge [connect] (v10);   
\path (v5) -- node[auto=false]{\ldots}  (v7); 
\path (v7) -- node[auto=false]{\ldots}  (v9); 
\path (v6) -- node[auto=false]{\ldots}  (v8); 
\path (v8) -- node[auto=false]{\ldots}  (v10);    
\end{tikzpicture}
\caption{Round-robin permutation.}
\label{fig:robin:move}
\end{subfigure}

\vspace{.7cm}

\begin{subfigure}{\textwidth}
\centering
\begin{tikzpicture}
\tikzstyle{main}=[circle, minimum size = 10mm, thick, draw =black!80, node distance = 6mm]
\tikzstyle{connect}=[-latex, thick]
\tikzstyle{box}=[rectangle, draw=black!100]
\node[main, fill = black!10] (v1){$\scriptstyle 1$ };
\node[main] (v3) [right=of v1] {$\scriptstyle 2$ };
\node[main] (v5) [right=of v3] {$\scriptstyle 3$ };
\node[main] (v7) [right=of v5] {$\scriptstyle{{2i-1}}$};
\node[main] (v9) [right=of v7] {$\scriptstyle{{N-5}}$};
\node[main] (v11) [right=of v9] {$\scriptstyle{{N-3}}$};
\node[main] (v2) [below=of v1] {$\scriptstyle 4$};
\node[main] (v4) [below=of v3] {$\scriptstyle 6$};
\node[main] (v6) [below=of v5] {$\scriptstyle 8$};
\node[main] (v8) [below=of v7] {$\scriptstyle {{2i}}$};
\node[main] (v10) [below=of v9] {$\scriptstyle{{N}}$};
\node[main] (v12) [below=of v11] {$\scriptstyle{{N-1}}$};
\draw (v1) -- (v2);
\draw (v3) -- (v4);
\draw (v5) -- (v6);
\draw (v7) -- (v8);
\draw (v9) -- (v10);  
\draw (v11) -- (v12);     
\path (v5) -- node[auto=false]{\ldots}  (v7); 
\path (v7) -- node[auto=false]{\ldots}  (v9); 
\path (v6) -- node[auto=false]{\ldots}  (v8); 
\path (v8) -- node[auto=false]{\ldots}  (v10);    
\end{tikzpicture}
\caption{Round-robin pairing, step $2$.}
\label{fig:robin:day2}
\end{subfigure}
\caption{Round-robin algorithm.}
\end{figure}

\section{Conditional dependencies of round-robin graphs}
\label{sec:CondDep}
%

%
%
Let $d^{E}_0$ denote the graph distance in $(\{1,\ldots,N\},E)$, that is $d^{E}_0(i,j)$ is the minimal length of a path between nodes $i$ and $j$. Write $\{V_1,\ldots,V_N\}=\cup_{q=0}^{N}V^{E}_{q}$, where $V^{E}_{0}= \{V_1\}$ and, for any $q\geqslant 1$, $V^{E}_{q}$ is the set of $V_i$ such that $d^{E}_0(1,i)=q$. 
Let $\ordermax{}{E}+1$ denote the maximal distance between $1$ and $i\in\{1,\ldots,N\}$:
\[
\ordermax{}{E}+1 = \max_{1\leqslant i\leqslant  N}\,d^{E}_0(1,i)\eqsp.
\]
\begin{enumerate}[-]
\item For all $1\leqslant q \leqslant \ordermax{}E+1$, let 
\[
X^{E}_{q\link q}=\{X_{i,j} : (i,j) \text{ or }(j,i)\in E,\; i\in V_q^{E},\; j\in V_q^{E}\}\eqsp.
\]
The set $X^{E}_{q\link q}$ gathers all $X_{i,j}$ such that $i$ and $j$ satisfy $d^{E}_0(1,i) = d^{E}_0(1,j)=q$.
\item For all $0\leqslant q \leqslant \ordermax{}{E}$, let 
\[
X^{E}_{q\link q+1}=\{X_{i,j} : (i,j) \text{ or }(j,i)\in E,\; i\in V_q^{E},\; j\in V_{q+1}^{E}\}\eqsp.
\]
The set $X^{E}_{q\link q+1}$ gathers all $X_{i,j}$ such that $d^{E}_0(1,i) = q$ and $d^{E}_0(1,j)=q+1$.
\end{enumerate} 
Finally, for any $0\leqslant q \leqslant \ordermax{}{E}$, let 
\begin{align*}
X_{q}^{E}& = X_{q \link q+1}^{E}\cup X_{q+1\link q+1}^{E}\eqsp.
\end{align*}
Following \cite{lauritzen:1996}, the distribution $\bP^{E}_{\pi}$, given in \eqref{eq:jointlaw}, can be factorized with respect to an oriented acyclic graph where graph separations represent conditional independence. The factorization illustrates a global Markov property such that two sets of random variables $U_1$ and $U_2$ are independent given a third set $Z$ if $U_1$ and $U_2$ are d-separated by $Z$ in the oriented acyclic graph. The sets $U_1$ and $U_2$ are d-separated by $Z$ if every path from $U_1$ to $U_2$ is blocked by $Z$:
\begin{enumerate}[-]
\item the path contains a node in $Z$, and the edges of the path do not meet head-to-head at this node.
\item the path contains a node {\em not} in $Z$, none of its descendants are in $Z$, and the edges of the path {\em do} meet head-to-head at this node.
\end{enumerate}
Conditional dependencies described by $\bP^{E}_{\pi}$ can be represented in the graphical model of Figure~\ref{fig:generic:graphicalmodel}. 

\begin{figure}[h!]
\centering
\begin{tikzpicture}
\tikzstyle{player}=[circle, minimum size = 12mm, thick, draw =black!80, node distance = 8mm]
\tikzstyle{game}=[rectangle, minimum size = 9mm, thick, draw =black!80, node distance = 9mm]
\tikzstyle{connect}=[-latex, thick]
\tikzstyle{box}=[rectangle, draw=black!100]
\node[player] (v0){$V^{E}_{0}$};
\node[game, fill = black!10] (X01) [above right =of v0] {$X^{E}_{0}$};
\node[player] (v1) [below right=of X01] {$V^{E}_{1}$ };

\path (v0) edge [connect] (X01);  
\path (v1) edge [connect] (X01);  

\node[game, fill = black!10] (X12) [above right =of v1] {$X^{E}_{1}$};
\node[player] (v2) [below right=of X12] {$V^{E}_{2}$ };

\path (v1) edge [connect] (X12);  
\path (v2) edge [connect] (X12);  

\node[player] (vq) [right=of v2,xshift = .7cm] {$V^{E}_{\ordermax{}{E}}$ };

\path (v2) -- node[auto=false]{\ldots}  (vq);

\node[game, fill = black!10] (Xq1) [above right =of vq] {$X^{E}_{\ordermax{}{E}}$};
\node[player] (vq1) [below right=of Xq1] {$V^{E}_{{\small \ordermax{}{E}+1}}$ };

\path (vq1) edge [connect] (Xq1); 
\path (vq) edge [connect] (Xq1); 

\end{tikzpicture}
\caption{Graphical model of paired comparisons contests.}
\label{fig:generic:graphicalmodel}
\end{figure}

For instance, $V_1^E$ is independent of $V_2^E$ ($Z=\emptyset$) as every path between them goes through $X_1^E$, which is not in $Z$, with two edges meeting head-to-head at $X_1^E$. For all $0\le q \le \ordermax{}{E}$ any path between $X^{E}_{q}$ and other vertices except $V_q^{E}$ and $V_{q+1}^{E}$ goes through $V_{q}^{E}$ or $V_{q+1}^{E}$ which means that $X^{E}_{q}$ is independent of all other nodes given $V_q^{E}$ and $V_{q+1}^{E}$ ($Z = \{V_{q}^{E},V_{q+1}^{E}\}$ and no head-to-head edges). In particular, for all $0\leqslant q \leqslant \ordermax{}{E}$, and all $\pi\in\Pi$,
\[
\bP^{E}_{\pi}\left(X^{E}_{q}\middle| V,X^{E}_{0:q-1}\right) = \bP^{E}_{\pi}\left(X^{E}_{q}\middle| V^{E}_{q},V^{E}_{q+1}\right)=\prod_{(i,j) :X_{i,j}\in X^{E}_{q}}\condlik\left(X_{i,j},V_i,V_j\right)\eqsp.
\]



\begin{lemma}\label{lem:RR:stat} 
Let $N\geqslant n\geqslant 1$ and let $(\{1,\ldots,N\},E^{n,N}_{\text{RR}})$ denote the corresponding round-robin graph defined in Section~\ref{sec:RRscheduling}. Assume that $2\leqslant n<N/4$. Then, $\ordermax{}{E^{n,N}_{\text{RR}}}$ is the quotient of the Euclidean division of $N/2-1$ by $n-1$, that is 
\[
N/2-1= \ordermax{}{E^{n,N}_{\text{RR}}}(n-1)+\remainder{n}{N}\quad\text{with}\quad 0\le \remainder{n}{N}<n-1\eqsp.
\]
Moreover, $(V^{E^{n,N}_{\text{RR}}}_{q+1},X^{E^{n,N}_{\text{RR}}}_{q})_{2\leqslant q\leqslant \ordermax{}{E^{n,N}_{\text{RR}}}-1}$ is a stationary Markov chain such that for all $2\leqslant q\leqslant \ordermax{}{E^{n,N}_{\text{RR}}}-1$,
\[
|V^{E^{n,N}_{\text{RR}}}_{q}| = 2(n-1)\eqsp, \quad |X^{E^{n,N}_{\text{RR}}}_{q}| = n(n-1)\eqsp.
\]
\end{lemma}
\noindent Lemma~\ref{lem:RR:stat} is proved in Section~\ref{sec:round:robin}. It shows that RR graphs can be approximated by stationary hidden Markov models. When $E=E^{n,N}_{\text{RR}}$, by Lemma~\ref{lem:RR:stat}, 
the joint sequence $(V^{E}_{q+1},X^{E}_q)_{2\leqslant q \leqslant \ordermax{}{E}-1}$ is a stationary Markov chain which points toward the following decomposition of the likelihood. 
\begin{equation}
\label{eq:decomp:likelihood}
\log \likelihood{E}{\pi}\left(X^{E}\right) = \log \likelihood{E}{\pi}\left(X_{2:\ordermax{}{E}-1}^{E}\right) + \log \likelihood{E}{\pi}\left(X_{0}^{E},X_{1}^{E},X_{\ordermax{}{E}}^{E}\middle|X_{2:\ordermax{}{E}-1}^{E}\right)\eqsp.
\end{equation}
It is shown in Section~\ref{sec:main} that under a minoration condition on the kernel $k$, the last term in \eqref{eq:decomp:likelihood} is $o(\ordermax{}{E})$ when $N$ grows to infinity. This implies that the first term is the leading term in the analysis of the likelihood's asymptotic behavior. 
The uniform minoration condition of $k$ also ensures that the joint Markov chain $(V^E_{q+1},X^E_{q})_{q\geqslant 2 }$ is uniformly ergodic and admits the whole space $\Vset\times\Xset$ as small set
with stationary distribution on $\Vset\times\Xset$ given by $(A,x_0)\mapsto \int \un{A}(v_1)\pi_V(\rmd v_1) \pi_V(\rmd v_0)\condlik(x_0,v_0,v_1)$.
The joint stationary Markov chain $(V^E_{q+1},X^E_{q})_{q\geqslant 2 }$ may then be extended to a stationary process $({\bf X^{n}},{\bf V^{n}})$ indexed by $\mathbb{Z}$ with the same transition kernel. Hereafter, the distribution of this extended chain is denoted by $\bbP_{\pi}^n$.  

\section{Risk bounds for the MLE}
\label{sec:main}
Section~\ref{sec:ConvLikelihood} computes the limit likelihood function and shows why this limit defines a natural risk function to evaluate the MLE. Risk bounds for the MLE are obtained in Section~\ref{sec:RBMLE} using concentration inequalities for Markov chains.

\subsection{Asymptotic analysis of the likelihood}\label{sec:ConvLikelihood}
The problem being reduced to the analysis of the graphical model represented in Figure~\ref{fig:generic:graphicalmodel}, convergence results follow from geometrically decaying mixing rates of the conditional laws of the strengths ${V}^{E}_{k}$ given the observations. These rates are established under the following assumption. For any probability distribution $\pi$, denote by $\mathrm{supp}(\pi)$ the support of $\pi$.
\begin{hypH}
\label{assum:strongmix}
There exists $\varepsilon>0$ such that for all $x\in\cX, \pi\in\Pi\cup\{\bayes\}$ and $v_1,v_2 \in \mathrm{supp}(\pi)$, $\condlik(x,v_{1},v_2)\geqslant\varepsilon$.
\end{hypH}
%
Define also the shift operator $\shift$ on $(\cX^{n(n-1)})^{\mathbb{Z}}$ by $(\shift x)_k = x_{k+1}$ for all $k\in\mathbb{Z}$ and all $x\in (\cX^{n(n-1)})^{\mathbb{Z}}$. The following result establishes loss of memory properties of the extended hidden Markov chain $({\bf X^{n}},{\bf V^{n}})$ as well as the asymptotic behavior of the likelihood. This is the first main result of the paper.

\begin{theorem}\label{thm:ConvVrais}
Assume H\ref{assum:strongmix} holds. Then, for all $\mathsf{n}'>\mathsf{n}\geqslant q$ and all $p'<p<q$ in $\mathbb{Z}$,
\begin{align*}
\sup_{\pi\in\Pi}\left|\log \bbP^n_{\pi}\left({\bf X}^n_{q}\middle|{\bf X}^n_{q+1:\mathsf{n}}\right) - \log \bbP^n_{\pi}\left({\bf X}^n_{q}\middle|{\bf X}^n_{q+1:\mathsf{n}'}\right) \right|&\leqslant \varepsilon^{-n^2}\left(1-\varepsilon^{n^2}\right)^{\mathsf{n}-q-1} \eqsp,\\
\sup_{\pi\in\Pi}\left|\log \bbP^n_{\pi}\left({\bf X}^n_{q}\middle|{\bf X}^n_{p:q-1}\right) - \log \bbP^n_{\pi}\left({\bf X}^n_{q}\middle|{\bf X}^n_{p':q-1}\right) \right|&\leqslant \varepsilon^{-n^2}\left(1-\varepsilon^{n^2}\right)^{q-p} \eqsp.
\end{align*}
As a consequence, there exists a function $\ell_{\pi}^{n}$ such that for all $q$ in $\mathbb{Z}$,

\begin{equation}\label{eq:bound:loglik}
\sup_{\pi\in\Pi} \absj{\log \bbP^n_{\pi}\left({\bf X}^n_{q}\middle|{\bf X}^n_{q+1:\mathsf{n}}\right)-\ell_{\pi}^{n}(\shift^q{\bf X^{n}})}\limit{\mathsf{n}}0,\qquad \bbP_{\bayes}^n\mbox{-a.s}\eqsp. 
\end{equation}
Finally, when $E=E^{n,N}_{\text{RR}}$, for all $\pi\in\Pi$, $\bbP_{\bayes}^n$-a.s.  and in $\mathrm{L}^1(\bbP_{\bayes}^n)$,
\begin{equation}\label{eq:ApproxRisk}
\frac{1}{\ordermax{}{E}}\log \likelihood{E}{\pi}\left(X^{E}\right) \limit{N} \kullback^n_{\bayes}(\pi) = \E_{\bayes}^n\left[\ell_{\pi}^{n}({\bf X^{n}})\right]\eqsp. 
\end{equation}
\end{theorem}

Theorem~\ref{thm:ConvVrais} is proved in Section~\ref{ProofThm2}. It establishes convergence of the likelihood to the limit $\kullback^n_{\bayes}(\pi)$ when the number of nodes $N\to \infty$ while $n$ remains fixed. The rate of almost sure convergence $\ordermax{}{E}$ is proportional to $N$ in this case by Lemma~\ref{lem:RR:stat}. Eq~\eqref{eq:ApproxRisk} is the key to understand the definition of the risk function used in Section~\ref{sec:RBMLE}. 

Let $Y,Y_1,\ldots,Y_N$ denote i.i.d. observations in $\cY$, let $F$ denote a set of parameters, and let $\ell: F\times\cY\to\rset$ denote a loss function. The empirical risk minimizer is defined in this context by
\[
\hat f_N^{\text{ERM}}=\argmin_{f\in F}\sum_{i=1}^N\ell(f,Y_i)\eqsp.
\]
If $\E[\ell(f,Y_1)]<\infty$ for all $f\in F$, the performance of any $f\in F$ is measured by the \emph{excess risk}  \cite{MassartNedelec2006}
\[
R(f)=\E\cro{\ell(f,Y)}-\E\cro{\ell(f^*,Y)}\eqsp,
\]
where $Y$ is a copy of $Y_1$, independent of $Y_{1},\ldots,Y_N$ and $f^*$ is the minimizer of $\E[\ell(f,Y)]$ over $F$. Note that, when $\E[\ell(f,Y_1)]<\infty$ for all $f\in F$, the normalized empirical criterion satisfies almost surely,
\[
\frac1N\sum_{i=1}^N\ell(f,Y_i)\to \E[\ell(f,Y_1)]\eqsp.
\]
Therefore, following for instance \cite{MR1641250, MR0474638}, the excess risk $R(f)$ in learning theory is the difference between the asymptotic normalized empirical loss evaluated at $f$ and the minimizer of this quantity. 

In this paper, the MLE minimizes over $\pi\in\Pi$ the loglikelihood $-\log \likelihood{E}{\pi}\left(X^{E}\right)$. Using the identifications $\pi\sim f$, $\Pi\sim F$ and $-\log \likelihood{E}{\pi}\left(X^{E}\right)\sim \sum_{i=1}^N\ell(f,Y_i)$, Theorem~\ref{thm:ConvVrais} suggests to use  $-\kullback^n_{\bayes}(\pi)$ as a surrogate for $\E\cro{\ell(f,Y)}$. Therefore, define, for all  $\pi\in\Pi$,
\begin{equation}\label{def:ExcessRisk}
R^n_{\bayes}(\pi)=\kullback^{n}_{\bayes}(\bayes)-\kullback^{n}_{\bayes}(\pi)\eqsp. 
\end{equation}
By Proposition~\ref{prop:max:likelihood}, $\bayes$ is actually a minimizer of $-\kullback^{n}_{\bayes}(\pi)$ over $\Pi\cup\{\bayes\}$. 
Therefore, $R^n_{\bayes}$ is a natural extension of the excess risk associated with the likelihood function.
Notice here that the model is non identifiable. 
Clearly, the observed distribution is not changed if the distribution $\pi$ of $V$ is replaced by the distribution of $\varphi(V)$, for any mapping $\varphi:\cV\to \cV$ such that $k(x,\varphi(v_1),\varphi(v_2))=k(x,v_1,v_2)$ for any $x\in\cX$, and $v_1,v_2$ in $\cV$.
For example, in the Bradley-Terry model, for any $\lambda>0$, $k(x,\lambda v_1,\lambda v_2)=k(x,v_1,v_2)$ for any $x\in\cX$, and $v_1,v_2$ in $\cV$.
It is not easy however to describe precisely the class of transformations that would leave the observed distribution invariant in general, specially for a fixed $n$. 
This is why, in the following, we focus on bounding the risk $R^n_{\bayes}(\hat{\pi})$ of the estimator $\hat{\pi}$ rather than trying to bound a distance between $\pi^*$ and $\hat{\pi}$.

\subsection{Non asymptotic deviation bounds for the MLE}\label{sec:RBMLE}
The following theorem provides nonasymptotic deviation bounds for the excess risk of the MLE. This is the main result of this paper. Let $\|\cdot\|_{\mathsf{tv}}$ denote the total variation norm : for any signed measure $\pi$ on $\cV$,
\[
\|\pi\|_{\mathsf{tv}} = \mathrm{sup}\left\{\int \pi(\rmd v)f(v)\eqsp:\eqsp f \mathrm{\, bounded\, and\, measurable\, on\, }\cV\eqsp,\|f\|_{\infty}=1\right\}\eqsp.
\]
\begin{theorem}
\label{th:risk}
Assume H\ref{assum:strongmix} holds and $(\{1,\ldots,N\},E)$ is the round-robin graph (that is $E=E^{n,N}_{\text{RR}}$). For any probability measures $\pi$ and $\pi'$, define
 \begin{equation}
 \label{eq:def:d}
 d(\pi,\pi')=
\begin{cases}
 \|\pi-\pi'\|_{\mathsf{tv}}\log\pa{\frac1{\|\pi-\pi'\|_{\mathsf{tv}}}}&\text{if} \;\|\pi-\pi'\|_{\mathsf{tv}}< \mathrm{e}^{-1}\eqsp,\\
  \|\pi-\pi'\|_{\mathsf{tv}}&\text{if} \;\|\pi-\pi'\|_{\mathsf{tv}}\geqslant \mathrm{e}^{-1}\eqsp.
\end{cases}
\end{equation}
Let $\mathsf{N}(\Pi\cup\{\bayes\},d,\epsilon)$ be the minimal number of balls of $d$-radius $\epsilon$ necessary to cover $\Pi\cup\{\bayes\}$. Then, there exists $c>0$ such that, for any $t>0$ and any $n,N\geqslant 1$,
\[
\likelihood{E}{\bayes}\pa{R^{n}_{\bayes}(\MLE^{E})>\frac{cn\varepsilon^{-6n^2}}{\sqrt{N}}\cro{\int_0^{+\infty}\sqrt{\log \mathsf{N}(\Pi\cup\{\bayes\},d,\epsilon)}\rmd\epsilon+t}}\le \mathrm{e}^{-t^2}\eqsp.
\] 
\end{theorem}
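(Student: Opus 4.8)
The plan is to follow the classical empirical-risk-minimization route: reduce the control of the excess risk to a deviation bound for the supremum of a centered empirical process, and then feed a subgaussian increment inequality — obtained from the Markov chain concentration tools of Section~\ref{sec:ProbTools} — into a chaining argument. Write $\hat P_N(\pi) = \ordermax{n}{N}^{-1}\log\likelihood{n,N}{\pi}(X^{n,N})$ for the normalized empirical criterion, whose limit is $\kullback^n_{\bayes}(\pi)$ by Theorem~\ref{thm:ConvVrais}, and set $Z_N(\pi) = \hat P_N(\pi) - \kullback^n_{\bayes}(\pi)$ for the centered fluctuation. Since $\MLE^{n,N}$ maximizes the likelihood and $\bayes$ is an admissible comparison point, one has $\hat P_N(\bayes)-\hat P_N(\MLE^{n,N})\le 0$, so that inserting $\hat P_N$ into the definition \eqref{def:ExcessRisk} gives
\[
R^n_{\bayes}(\MLE^{n,N}) \le \bigl(\hat P_N(\MLE^{n,N}) - \kullback^n_{\bayes}(\MLE^{n,N})\bigr) - \bigl(\hat P_N(\bayes) - \kullback^n_{\bayes}(\bayes)\bigr) \le \sup_{\pi\in\Pi}\bigl(Z_N(\pi) - Z_N(\bayes)\bigr)\eqsp.
\]
It therefore suffices to bound the deviations of this supremum.

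Next I would recast $Z_N(\pi)$ as a centered additive functional of the stationary process $({\bf X^{n}},{\bf V^{n}})$. By Theorem~\ref{thm:ConvVrais}, up to an asymptotically negligible error $\hat P_N(\pi)$ equals $\ordermax{n}{N}^{-1}\sum_q \ell_{\pi}^{n}(\shift^q{\bf X^{n}})$ while $\kullback^n_{\bayes}(\pi)=\E_{\bayes}[\ell_{\pi}^{n}({\bf X^{n}})]$, so that $Z_N(\pi)$ is the empirical average of $\ell_{\pi}^{n}$ along the chain, centered at its stationary mean, and the increment $Z_N(\pi)-Z_N(\pi')$ is the centered average of $\ell_{\pi}^{n}-\ell_{\pi'}^{n}$. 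Here the number of summands is $\ordermax{n}{N}$, proportional to $N$ by Lemma~\ref{lem:RR:stat}, which is the source of the $N^{-1/2}$ scaling.

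The crux is to establish a subgaussian increment inequality of the form
\[
\likelihood{n,N}{\bayes}\bigl(\absj{Z_N(\pi) - Z_N(\pi')} > u\bigr) \le 2\exp\!\left(-\frac{cN u^2}{n^2\varepsilon^{-12n^2}\,d(\pi,\pi')^{2}}\right)\eqsp,
\]
which rests on two ingredients. First, a quantitative modulus of continuity $\|\ell_{\pi}^{n}-\ell_{\pi'}^{n}\|_\infty \lesssim n\varepsilon^{-6n^2}\,d(\pi,\pi')$: the metric $d$ of \eqref{eq:def:d} is tailored precisely to this, total variation controlling the difference of the conditional laws while the logarithmic factor absorbs the logarithm present in the log-likelihood, and the lower bound $\varepsilon$ of H\ref{assum:strongmix} together with the geometric loss of memory of the smoothing recursion over blocks of size of order $n^2$ (Lemma~\ref{lem:RR:stat}) keeping the limit contrast finite and yielding the factor $\varepsilon^{-6n^2}$. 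Second, the Markov-chain subgaussian deviation bound \eqref{eq:subGaussIncr} of Section~\ref{sec:ProbTools}, applied to $\ell_{\pi}^{n}-\ell_{\pi'}^{n}$ evaluated along the chain; because $\ell_{\pi}^{n}$ depends on the whole trajectory through the geometrically mixing smoothing functional rather than being a simple block sum, the relevant concentration statement is the one designed for functionals with exponentially decaying dependence.

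Finally, with subgaussian increments relative to the rescaled metric $\rho(\pi,\pi')=Cn\varepsilon^{-6n^2}N^{-1/2}d(\pi,\pi')$, I would invoke Dudley's entropy bound and its generic-chaining tail refinement over $\Pi\cup\{\bayes\}$, which is compact for $d$ by assumption, to control simultaneously the expectation and the tail of $\sup_{\pi}\bigl(Z_N(\pi)-Z_N(\bayes)\bigr)$. A change of variables turns the entropy integral with respect to $\rho$ into $Cn\varepsilon^{-6n^2}N^{-1/2}\int_0^{+\infty}\sqrt{\log\mathsf{N}(\Pi\cup\{\bayes\},d,\epsilon)}\,\rmd\epsilon$, producing the announced prefactor and leading term, while the chaining tail estimate contributes the additive $t$ with Gaussian tail $e^{-t^2}$. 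The main obstacle is the third step: transferring the loss-of-memory property into a uniform Lipschitz control of $\ell_{\pi}^{n}$ in the metric $d$ and coupling it with the Markov-chain concentration inequality with explicit and correct dependence on $\varepsilon$ and $n$.
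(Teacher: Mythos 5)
Your overall architecture --- reduce the excess risk to the supremum of a centered process via the basic ERM inequality, establish subgaussian increments with respect to $d$, then chain --- is the same as the paper's. The only structural difference at that level is that the paper obtains the $e^{-t^2}$ tail from a separate bounded-difference concentration of the supremum (Lemma~\ref{lem:FGen}) and uses Dudley's bound only for the expectation, whereas you invoke a chaining tail bound; that variant is legitimate. The genuine gap is in the step you yourself flag as the crux: your two ingredients do not combine to give the subgaussian increment inequality. A sup-norm modulus $\norm{\ell^n_\pi-\ell^n_{\pi'}}_\infty\lesssim n\varepsilon^{-6n^2}d(\pi,\pi')$ plus a generic concentration statement for geometrically dependent functionals is not enough, because any McDiarmid-type inequality for the chain needs \emph{per-coordinate} oscillation bounds of the centered sum. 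If you use only the sup-norm modulus, changing one coordinate $X_{\tilde q}$ may move every term $q\le\tilde q$ by an amount of order $d(\pi,\pi')$, so the oscillation in coordinate $\tilde q$ is of order $(\tilde q/\ordermax{n}{N})\,d(\pi,\pi')$ and the variance proxy is of order $\ordermax{n}{N}\,d(\pi,\pi')^2$, which diverges with $N$. If you use only the geometric decay of dependence (which is uniform in $(\pi,\pi')$), you get a subgaussian scale of order $\varepsilon^{-2n^2}/\sqrt{\ordermax{n}{N}}$ that does not shrink as $\pi'\to\pi$, so chaining over an infinite class cannot even start.

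The missing idea --- and the actual content of the paper's Lemma~\ref{lem:inrcZGen} --- is to combine the two bounds \emph{coordinatewise}: for each perturbed index $\tilde q$ and each term $q\le\tilde q$, take the minimum of the total-variation bound of Lemma~\ref{lem:IncrementsGen} (uniform in $\tilde q-q$) and the loss-of-memory bound of Lemma~\ref{lem:BoundedDifferenceGen} (uniform in $(\pi,\pi')$), and only then sum:
\[
\sum_{j\ge 0}\pa{\norm{\pi_V-\pi'_V}_{\mathsf{tv}}\wedge(1-\nu)^{j}}\lesssim \nu^{-1}\,d(\pi_V,\pi'_V)\eqsp.
\]
This min-then-sum is what generates the metric $d$ and its logarithmic factor; it has nothing to do with ``the logarithm present in the log-likelihood'', which is handled throughout by $|\log x-\log y|\le|x-y|/(x\wedge y)$ together with H\ref{assum:strongmix} and only costs powers of $\varepsilon^{-1}$. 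Note also that the quantity entering the minimum must be the total variation distance and not $d$ itself: feeding your $d$-modulus into the same computation yields $\sum_j\pa{d\wedge(1-\nu)^j}\lesssim\nu^{-1}d\log(1/d)$, an extra logarithm, so the chaining would be governed by entropy numbers in a strictly larger metric than the one in the statement. As written, your plan therefore either produces increments that do not contract in $\pi$, or proves a weaker theorem; once repaired it becomes exactly the paper's argument (Lemmas~\ref{lem:IncrementsGen}, \ref{lem:BoundedDifferenceGen} and \ref{lem:inrcZGen}, plus the tensorization $d(\pi^{\otimes 2(n-1)},(\pi')^{\otimes 2(n-1)})\lesssim n^2\,d(\pi,\pi')$ leading to \eqref{eq:subGaussIncr}). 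A minor further point: for a non-asymptotic statement you must also quantify, uniformly in $\pi$, the replacement of the normalized log-likelihood by the stationary average of $\ell^n_\pi$; the paper gets a deterministic error of order $\varepsilon^{-3n^2}/\ordermax{n}{N}$ from Lemma~\ref{lem:exp:forgetGen}, and ``asymptotically negligible'' is not sufficient here.
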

Theorem~\ref{th:risk} is proved in Section~\ref{sec:proofs:risk}.
It provides the first non asymptotic risk bounds for any estimator of $\bayes$.
Besides, to the best of our knowledge, the ``sparse"  observation setting where each player only faces a few opponent has never been considered previously, neither in the Bradley-Terry model nor in any extensions.
Theorem~\ref{th:risk} demonstrates that the estimation of the distribution $\bayes$ of the parameters $V$ is fundamentally different from the problem of estimating $V$ that is usually considered, at least in Bradley-Terry models. 
While estimating nodes weights is possible under Zermelo's strong connectivity condition \cite{zemerlo:1929, Simons_Yao:1999, YanYangXu:2011}, the estimation of their distribution can be performed without such condition. 

The quasi-metric $d$ defined in \eqref{eq:def:d} used to measure the entropy of $\Pi$ is not intuitive. However, it is easy to check that $d(\pi,\pi')\lesssim_{\alpha} \norm{\pi-\pi'}_{\text{tv}}^{1-\alpha}$ for any $\alpha>0$. It follows that, for any class $\Pi$ with polynomial entropy for the total variation distance, that is such that $\mathsf{N}(\Pi\cup\{\bayes\},\norm{\cdot}_{\text{tv}},\epsilon)\lesssim \epsilon^D$ for small $\epsilon$, Dudley's entropy integral for $d$ satisfies 
\[
\int_0^{+\infty}\sqrt{\log \mathsf{N}(\Pi\cup\{\bayes\},d,\epsilon)}\rmd\epsilon\lesssim_{\alpha} \sqrt{D}\eqsp.
\]
Therefore, ``slow rates" of convergence are obtained for the MLE. The polynomial growth $\mathsf{N}(\Pi\cup\{\bayes\},\norm{\cdot}_{\text{tv}},\epsilon)\lesssim \epsilon^D$ is extremely standard, see \cite[p271--274]{MR1652247} for various examples where this assumption is satisfied and our result applies. 
On the other hand, ``fast" rates of convergence remain an open question. 
In particular, the margin condition \cite{MR1765618} required to prove such rates would hold if the total variation distance between strengths distributions was bounded from above by the excess risk derived from the asymptotic of the likelihood.

\appendix
\appendixpage

The remaining of the paper is devoted to the proof of the main results. Section~\ref{sec:round:robin} proves Lemma~\ref{lem:RR:stat}, describing precisely the structure of the graphical model given in Figure~\ref{fig:generic:graphicalmodel} in the case of a round-robin scheduling. Then, Section~\ref{sec:ProbTools} establishes central tools for the analysis of the likelihood of stationary processes whose conditional dependences are described by the graphical model in Figure~\ref{fig:generic:graphicalmodel}. These results are stated as independent lemmas as they might be of independent interest.  Proofs of the main theorems are finally gathered in Section~\ref{sec:MainProofs}.

\section{Proof of Lemma~1}
\label{sec:round:robin}
This section details the sets $V_q^{E}$ and $X_q^{E}$ for $0\le q\le \ordermax{}{E}+1$ when $E=E^{n,N}_{\text{RR}}$ (cf. Figures~\ref{fig:robin:day1}-\ref{fig:robin:day2}). In the following, notations  $i$ are identified with $V_i$ for all $1\le i \le N$, we also use $E=E^{n,N}_{\text{RR}}$ to shorten notations. Lemma~\ref{lem:RR:stat} follows directly from Lemmas~\ref{lem:ElementsAtDistanceq} and~\ref{lem:cardX} below. To prove these lemmas, consider the following notations.
\begin{gather*}
\cE=\{4x-1,4x : x\in[\PE{N/4}]\}\quad\mbox{and}\quad \cO=[N]\setminus\cE\eqsp.
\end{gather*}
The notation $\cE$ (resp $\cO$) comes from the fact that $\cE$ (resp $\cO$) contains all indices of the form $4x$ (resp. of the form $(2(2x+1))$) which are paired with $1$ after an \emph{even} (resp \emph{odd}) number $n\le N/4$ of permutations of the round-robin algorithm. For all $1\le q\le \ordermax{}{E}$, let
\[
V_{q,e}^{E}= V_{q}^{E}\cap \cE\quad\mbox{and}\quad V_{q,o}^{E}= V_{q}^{E}\cap \cO\eqsp.
\]

\begin{lemma}\label{lem:ElementsAtDistanceq} 
Let $n,N\ge1$ and $(\{1,\ldots,N\},E)$ be the round-robin graph ($E=E^{n,N}_{\text{RR}}$). Assume that $2\le n<N/4$ and let $N/2-1= \ordermax{}{E}(n-1)+\remainder{}{E}$ where $0\le \remainder{}{E}<n-1$. Then, 
\begin{equation}\label{def:V1}
V_{1}^{E}=\{V_{2x} : x=1,\ldots,n\}\eqsp, 
\end{equation}
and, for any $2\le q \le \ordermax{}{E}$, 
\begin{multline}\label{def:Vq}
V_{q}^{E}=\{V_{2x+1} : x\in[(q-2)(n-1)+1,(q-1)(n-1)]\}\\
\cup\{V_{2x} : x\in[2+(q-1)(n-1),1+q(n-1)]\}\eqsp.
\end{multline}
Furthermore,
\begin{multline}\label{def:Vq01'}
V_{\ordermax{}{E}+1}^{E}=\{V_{2x+1} : x\in[(\ordermax{}{E}-1)(n-1)+1,\ordermax{}{E}(n-1)+\remainder{}{E}]\}\\
\cup\{V_{2x} : x\in[2+\ordermax{}{E}(n-1),1+\remainder{}{E}+\ordermax{}{E}(n-1)]\}\eqsp.
\end{multline}
Therefore, $|V_{0}^{E}|=1$, $|V_{1}^{E}|=n$ and for all $2\le q\le\ordermax{}{E}$, $\absj{V_{q}^{E}}=2(n-1)$.
\end{lemma}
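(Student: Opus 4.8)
The plan is to make the edge set $E^{n,N}_{\text{RR}}$ completely explicit and then run a breadth-first search from node $1$. The rotation of Figure~\ref{fig:robin:move} fixes node $1$ and cyclically permutes the remaining $N-1$ nodes, so I would place those $N-1$ positions on a cycle $\bZ/(N-1)\bZ$ on which the rotation becomes the shift $c\mapsto c+1$, node $1$ playing the role of a fixed centre. A direct inspection of the scheduling then shows that node $1$ is paired at round $t$ with $V_{2t}$, which already proves \eqref{def:V1}, and that at round $t$ the two rows are glued by the reflection whose fixed point is the centre's round-$t$ partner. Consequently two non-central nodes with cycle coordinates $j,j'$ are adjacent in $E^{n,N}_{\text{RR}}$ if and only if $j+j'\equiv -2t\pmod{N-1}$ for some $t\in\{1,\dots,n\}$, while node $1$ is adjacent exactly to the nodes of coordinates $-1,\dots,-n$. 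Turning the explicit index map (odd node $V_{2m-1}\mapsto m-2$ and even node $V_{2k}\mapsto -k$) back and forth is routine bookkeeping, but it is precisely what converts the dynamic scheduling into a static adjacency rule.

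With this rule in hand I would define candidate layers $W_q$ by the right-hand sides of \eqref{def:V1}, \eqref{def:Vq} and \eqref{def:Vq01'}, check that their index ranges are consecutive and exhaust $\{1,\dots,N\}$ (so the $W_q$ partition the nodes and the stated cardinalities are immediate), and prove that $W_q$ is the set of nodes at graph distance $q$ from node $1$. Since the neighbours of a coordinate $j$ form the arithmetic progression $\{-j-2t:\,t=1,\dots,n\}$ modulo $N-1$ (with the value $j$ itself standing for the centre), distance equals layer index once one verifies the two breadth-first-search conditions: every node of $W_q$ with $q\ge1$ has a neighbour in $W_{q-1}$, and no edge joins $W_q$ to $W_{q'}$ with $|q-q'|\ge2$. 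Both reduce to comparing that arithmetic progression with the consecutive index blocks defining the $W_q$. Here the split into $\cO$ and $\cE$ is exactly the split between the odd-indexed and even-indexed nodes: they form two strands $V_{q,o}^{n,N}$ and $V_{q,e}^{n,N}$ that emanate from node $1$ and run in opposite directions along the cycle, each advancing by $n-1$ indices per layer; the first layer $V_1^{n,N}$ is exceptional, of size $n$ rather than $2(n-1)$, because it consists of the $n$ neighbours of node $1$, which are all even-indexed.

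The main obstacle is the boundary analysis at the far side of the cycle, where the two strands meet. This is exactly where the Euclidean division $N/2-1=\ordermax{n}{N}(n-1)+\remainder{n}{N}$ enters: it records how many full layers of size $2(n-1)$ fit before the strands collide, and the leftover $\remainder{n}{N}$ determines the irregular last layer $V_{\ordermax{n}{N}+1}^{n,N}$ of \eqref{def:Vq01'}, whose odd and even parts no longer have equal length. Controlling the reductions modulo $N-1$ in the neighbour progressions near this meeting point, together with the attendant off-by-one effects, is the delicate step. I would organise it by first checking the two breadth-first-search conditions on the interior layers $2\le q\le\ordermax{n}{N}-1$, where no wraparound occurs, and then treating the first layer, the layers adjacent to the meeting point, and $V_{\ordermax{n}{N}+1}^{n,N}$ as separate cases. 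The hypothesis $2\le n<N/4$ is what keeps this analysis clean: it guarantees that no pairing is repeated in the first $n$ rounds (so the graph is genuinely $n$-regular) and that the two strands do not overlap before the far side, so the case analysis remains finite.
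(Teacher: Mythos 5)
Your proposal is correct, and it rests on the same foundation as the paper's proof: an explicit, static description of which pairs are matched during the first $n$ rotations. The difference lies in how that description is phrased and then exploited. The paper works directly with node labels: it establishes the rule \eqref{eq:Opponents} (away from the boundary, an odd-labelled node $i$ is paired with $V_{i+4x+1}$ and an even-labelled one with $V_{i-4x-1}$, $x=0,\dots,n-1$) and then builds the layers by forward induction, each $V_q^{n,N}$ being the set of new neighbours of $V_{q-1}^{n,N}$, with the irregular last layer \eqref{def:Vq01'} treated as a direct consequence of the algorithm. You instead pass to cycle coordinates on $\bZ/(N-1)\bZ$, where the rotation is the shift and the round-$t$ pairing is the reflection $j\mapsto -2t-j$; your adjacency criterion $j+j'\equiv -2t \pmod{N-1}$, $t\in\{1,\dots,n\}$, is exact for your index maps $V_{2m-1}\mapsto m-2$, $V_{2k}\mapsto -k$ (I checked it on the figures, and it reproduces \eqref{eq:Opponents} away from wraparound), and you then verify that the sets named in the lemma satisfy the breadth-first-search certificate: they partition $\{1,\dots,N\}$, every node of a layer has a neighbour in the previous layer, and no edge skips a layer. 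This guess-and-verify scheme is logically equivalent to the paper's inductive construction, but your modular formulation treats the wraparound at the far side of the cycle uniformly, which is exactly the step the paper passes over tersely; conversely, the paper's induction never needs to know the answer in advance. The interior-layer checks do reduce, as you say, to comparing arithmetic progressions with consecutive blocks of coordinates (for the odd-labelled block of a layer the round-one neighbour $-j-2$ already lies in the even-labelled block of the previous layer), and both arguments leave the remaining boundary case analysis at a comparable level of explicitness.

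One caution before splicing: your sentence identifying the two strands with $\cO$ and $\cE$ misuses the paper's notation. In the paper, $\cE=\{4x-1,4x : x\in[\PE{N/4}]\}$ and $V_{q,e}^{n,N}=V_{q}^{n,N}\cap\cE$, so $\cE$ mixes odd- and even-labelled nodes; in your parametrization it is the parity-of-coordinate split, i.e.\ the split for which no edge joins the two classes before the final layer, which is what Lemma~\ref{lem:cardX} and Figure~\ref{fig:robin:graphicalmodel} rely on. Your two strands --- odd-labelled nodes on one side of the cycle, even-labelled nodes on the other, each advancing by $n-1$ coordinates per layer --- form a different decomposition; it is the right one for proving the present lemma, but it should not be denoted $V_{q,o}^{n,N}$, $V_{q,e}^{n,N}$.
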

\begin{proof} To ease the reading of this proof, one can check its arguments on Figures~\ref{fig:groups:r0} and~\ref{fig:groups:r1} illustrating the case $n=3$.

\begin{figure}[h!]
\centering
\begin{tikzpicture}
\tikzstyle{main}=[circle, minimum size = 8mm, thick, draw =black!80, node distance = 4mm]
\tikzstyle{V6}=[draw =black!80,circle,minimum size = 8mm, thick,node distance = 4mm,palegreen,bottom color=palegreen!30,top color= white, text=violet]
\tikzstyle{V5}=[draw =black!80,circle,minimum size = 8mm, thick,node distance = 4mm,burntblue,bottom color=burntblue!30,top color= white, text=violet]
\tikzstyle{V4}=[draw =black!80,circle,minimum size = 8mm, thick,node distance = 4mm, text=black]
\tikzstyle{V3}=[draw =black!80,circle,minimum size = 8mm, thick,node distance = 4mm,lavander,bottom color=lavander!30,top color= white, text=violet]
\tikzstyle{V2}=[draw =black!80,circle,minimum size = 8mm, thick,node distance = 4mm,burntorange,bottom color=burntorange!30,top color= white, text=violet]
\tikzstyle{V1}=[draw =black!80,circle,minimum size = 8mm, thick,node distance = 4mm, fill = black!20, text=black]

\tikzstyle{legend_V1}=[node distance = 5mm,rectangle, rounded corners, thin, fill = black!20, draw, text=black,
                           minimum width=1.2cm, minimum height=0.7cm]
\tikzstyle{legend_V2}=[node distance = 8.8mm,rectangle, rounded corners, thin,bottom color=burntorange!30, ,top color= white,
                           burntorange, fill= white, draw, text=violet,
                           minimum width=1.2cm, minimum height=0.7cm]
\tikzstyle{legend_V3}=[node distance = 8.8mm,rectangle, rounded corners, thin,bottom color=lavander!30, ,top color= white,
                           lavander, fill= white, draw, text=violet,
                           minimum width=1.2cm, minimum height=0.7cm]      
\tikzstyle{legend_V4}=[node distance = 8.8mm,rectangle, rounded corners, thin, fill= white, draw, text=black,
                           minimum width=1.2cm, minimum height=0.7cm]                                                                                 
\tikzstyle{legend_V5}=[node distance = 8.8mm,rectangle, rounded corners, thin,bottom color=burntblue!30, ,top color= white,
                           burntblue, fill= white, draw, text=violet,
                           minimum width=1.2cm, minimum height=0.6cm]     
\tikzstyle{legend_V6}=[node distance = 4.8mm,rectangle, rounded corners, thin,bottom color=palegreen!30, ,top color= white,
                           palegreen, fill= white, draw, text=violet,
                           minimum width=1.2cm, minimum height=0.6cm]                                                             
\tikzstyle{connect}=[-latex, thick]
\tikzstyle{box}=[rectangle, draw=black!100]
\node[V1] (v1){$1$ };
\node[V4] (v3) [right=of v1] {$3$ };
\node[V4] (v5) [right=of v3] {$5$ };
\node[V1] (v7) [right=of v5] {$7$ };
\node[V1] (v9) [right=of v7] {$9$ };
\node[V1] (v2) [below=of v1] {$2$};
\node[V1] (v4) [below=of v3] {$4$};
\node[V1] (v6) [below=of v5] {$6$};
\node[V4] (v8) [below=of v7] {$8$};
\node[V4] (v10) [below=of v9] {$10$};
\node[V1] (v12) [right=of v10] {$12$};
\node[V1] (v14) [right=of v12] {$14$};
\node[V4] (v16) [right=of v14,xshift= .5cm] {${\scriptscriptstyle N-6}$};
\node[V4] (v18) [right=of v16] {${\scriptscriptstyle N-4}$};
\node[V1] (v20) [right=of v18] {${\scriptscriptstyle N-2}$};
\node[V1] (v22) [right=of v20] {${\scriptscriptstyle N}$};
\node[V1] (v11) [above=of v16] {${\scriptscriptstyle N-7}$};
\node[V1] (v13) [above=of v18] {${\scriptscriptstyle N-5}$};
\node[V4] (v15) [above=of v20] {${\scriptscriptstyle N-3}$};
\node[V4] (v17) [above=of v22] {${\scriptscriptstyle N-1}$};
   
\path (v14) -- node[auto=false]{\ldots}  (v16); 
\path (v9) -- node[auto=false]{\ldots}  (v11); 
 
\node[legend_V1] [below=of v4] {\small{\textsc{$V_1^{E}$}}};
\node[legend_V4] [below right=of v6,xshift= .3cm] {\small{\textsc{$V_2^{E}$}}};
\node[legend_V1] [below right=of v10,xshift= .6cm,yshift=-.25cm] {\small{\textsc{$V_3^{E}$}}};
\node[legend_V4] [below right=of v14,xshift= .8cm,yshift=.01cm] {\small{\textsc{$V_{\ordermax{}{E}-1}^{E}$}}};
\node[legend_V1] (leg) [below right=of v18,xshift= .7cm,yshift=-.25cm] {\small{\textsc{$V_{\ordermax{}{E}}^{E}$}}};
\node[legend_V4] [above right =of v15,xshift= -.9cm] {\small{\textsc{$V_{\ordermax{}{E}+1}^{E}$}}};
\end{tikzpicture}
\caption{Elements of $\cV^{E}$, case $n=3$, $\remainder{}{E} = 0$.}
\label{fig:groups:r0}
\end{figure}

\begin{figure}[h!]
\centering
\begin{tikzpicture}
\tikzstyle{main}=[circle, minimum size = 8mm, thick, draw =black!80, node distance = 4mm]
\tikzstyle{V6}=[draw =black!80,circle,minimum size = 8mm, thick,node distance = 4mm,palegreen,bottom color=palegreen!30,top color= white, text=violet]
\tikzstyle{V5}=[draw =black!80,circle,minimum size = 8mm, thick,node distance = 4mm,burntblue,bottom color=burntblue!30,top color= white, text=violet]
\tikzstyle{V4}=[draw =black!80,circle,minimum size = 8mm, thick,node distance = 4mm, text=black]
\tikzstyle{V3}=[draw =black!80,circle,minimum size = 8mm, thick,node distance = 4mm,lavander,bottom color=lavander!30,top color= white, text=violet]
\tikzstyle{V2}=[draw =black!80,circle,minimum size = 8mm, thick,node distance = 4mm,burntorange,bottom color=burntorange!30,top color= white, text=violet]
\tikzstyle{V1}=[draw =black!80,circle,minimum size = 8mm, thick,node distance = 4mm, fill = black!20, text=black]

\tikzstyle{legend_V1}=[node distance = 5mm,rectangle, rounded corners, thin, fill = black!20, draw, text=black,
                           minimum width=1.2cm, minimum height=0.7cm]
\tikzstyle{legend_V2}=[node distance = 8.8mm,rectangle, rounded corners, thin,bottom color=burntorange!30, ,top color= white,
                           burntorange, fill= white, draw, text=violet,
                           minimum width=1.2cm, minimum height=0.7cm]
\tikzstyle{legend_V3}=[node distance = 8.8mm,rectangle, rounded corners, thin,bottom color=lavander!30, ,top color= white,
                           lavander, fill= white, draw, text=violet,
                           minimum width=1.2cm, minimum height=0.7cm]      
\tikzstyle{legend_V4}=[node distance = 8.8mm,rectangle, rounded corners, thin, fill= white, draw, text=black,
                           minimum width=1.2cm, minimum height=0.7cm]                                                                                 
\tikzstyle{legend_V5}=[node distance = 8.8mm,rectangle, rounded corners, thin,bottom color=burntblue!30, ,top color= white,
                           burntblue, fill= white, draw, text=violet,
                           minimum width=1.2cm, minimum height=0.6cm]     
\tikzstyle{legend_V6}=[node distance = 4.8mm,rectangle, rounded corners, thin,bottom color=palegreen!30, ,top color= white,
                           palegreen, fill= white, draw, text=violet,
                           minimum width=1.2cm, minimum height=0.6cm]                                                             
\tikzstyle{connect}=[-latex, thick]
\tikzstyle{box}=[rectangle, draw=black!100]
\node[V1] (v1){$1$} ;
\node[V4] (v3) [right=of v1] {$3$};
\node[V4] (v5) [right=of v3] {$5$};
\node[V1] (v7) [right=of v5] {$7$};
\node[V1] (v9) [right=of v7] {$9$};
\node[V1] (v2) [below=of v1] {$2$};
\node[V1] (v4) [below=of v3] {$4$};
\node[V1] (v6) [below=of v5] {$6$};
\node[V4] (v8) [below=of v7] {$8$};
\node[V4] (v10) [below=of v9] {$10$};
\node[V1] (v12) [right=of v10] {$12$};
\node[V1] (v14) [right=of v12] {$14$};
\node[V4] (v24) [right=of v14,xshift= .5cm] {${\scriptscriptstyle N-8}$};
\node[V4] (v16) [right=of v24] {${\scriptscriptstyle N-6}$};
\node[V1] (v18) [right=of v16] {${\scriptscriptstyle N-4}$};
\node[V1] (v20) [right=of v18] {${\scriptscriptstyle N-2}$};
\node[V4] (v22) [right=of v20] {${\scriptscriptstyle N}$};
\node[V1] (v19) [above=of v24] {${\scriptscriptstyle N-9}$};
\node[V1] (v11) [above=of v16] {${\scriptscriptstyle N-7}$};
\node[V4] (v13) [above=of v18] {${\scriptscriptstyle N-5}$};
\node[V4] (v15) [above=of v20] {${\scriptscriptstyle N-3}$};
\node[V4] (v17) [above=of v22] {${\scriptscriptstyle N-1}$};
 
\path (v14) -- node[auto=false]{\ldots}  (v24); 
\path (v9) -- node[auto=false]{\ldots}  (v19); 

\node[legend_V1] [below=of v4] {\small{\textsc{$V_1^{E}$}}};
\node[legend_V4] [below right=of v6,xshift= .3cm] {\small{\textsc{$V_2^{E}$}}};
\node[legend_V1] [below right=of v10,xshift= .6cm,yshift=-.28cm] {\small{\textsc{$V_3^{E}$}}};
\node[legend_V4] [below right=of v14,xshift= .8cm,yshift=.01cm] {\small{\textsc{$V_{\ordermax{}{E}-1}^{E}$}}};
\node[legend_V1] (leg) [below right=of v18,xshift= -.65cm,yshift=-.25cm] {\small{\textsc{$V_{\ordermax{}{E}}^{E}$}}};
\node[legend_V4] [above right =of v15,xshift= -1.6cm] {\small{\textsc{$V_{\ordermax{}{E}+1}^{E}$}}};
\end{tikzpicture}
\caption{Elements of $\cV^{E}$, case $n=3,\ \remainder{}{E} = 1$.}
\label{fig:groups:r1}
\end{figure}

\noindent
We proceed by induction on $q$.
The definition of $V_{1}^{E}$ given by \eqref{def:V1} is straightforward. Then, $V_{2}^{E}$ contains:
\begin{enumerate}[-]
\item all $V_i$ paired with some $V_j\in V_{1}^{E}$ before the first RR permutation besides $V_1$ that does not belong to $V_{2}^{E}$. These are all  $\{V_{2x+1} : x=1,\ldots,n-1\}$ ;
\item all $V_i$ paired with $V_2$ and $V_4$ that are not in $V_{0}^{E}\cup V_{1}^{E}$. After $n$  RR permutations, all $V_i$ paired with $V_2$ are $\{V_1,V_{4x+2} : x=1,\ldots,n-1\}$ and those with $V_4$ are $\{V_1,V_3,V_{4x} : x=2,\ldots,n-2\}$. 
\end{enumerate}
Therefore, 
\[
V_{2}^{E}\supset \{V_{2x+1} : x=1,\ldots,n-1\}\cup\{V_{2x} : x=n+1,\ldots,2n-1\}\eqsp.
\]
On the other hand, by induction,  for all $i\notin \{N-2x+1, x=1,\ldots,2(n-1)\}\cup \{2x : x=1,\ldots, 2n-1\}$,
\begin{gather}
\notag\text{if }i \text{ is odd, it is paired with }\{V_{i+4x+1} : x=0,\ldots n-1\}\eqsp,\\
\label{eq:Opponents}\text{if }i \text{ is even, it is paired with }\{V_{i-4x-1} : x=0,\ldots,n-1\}\eqsp.
\end{gather}
This implies that there is no even number $i\ge 4n$ nor odd number $i>2n-1$ such that $V_i\in V_{2}^{n,N}$, which yields:
\begin{equation*}
 V_{2}^{E}= \{V_{2x+1} : x=1,\ldots,n-1\}\cup\{V_{2x} : x=n+1,\ldots,2n-1\}\eqsp.
\end{equation*}
\eqref{def:Vq} is obtained by induction using the same arguments and \eqref{def:Vq01'} is a direct consequence of the round-robin algorithm. 
The last claim follows by noting that for all $q\in[2,\ordermax{}{E}]$,
\[
|V_{q,e}^{E}|=|V_{q,o}^{E}|=n-1\eqsp.
\]
Indeed, one of the following cases holds.
\begin{enumerate}[-]
\item $n-1=2p$ for some $p\in \bN$. In this case, 
\[
|\{j: V_j\in V_{q,e}^{E},j\in 2\bZ\}|=|\{i : V_i\in V_{q,e}^{E},i\in 2\bZ+1\}|=p\eqsp.
\]
\item $n-1=2p+1$ for some $p\in \bN$. In this case, either 
\[
|\{j: V_j\in V_{q,e}^{E},j\in 2\bZ\}|=p,\qquad\text{and}\qquad |\{i : V_i\in V_{q,e}^{E},i\in 2\bZ+1\}|=p+1\eqsp,
\]
or 
\[
|\{j: V_j\in V_{q,e}^{E},j\in 2\bZ\}|=p+1,\qquad\text{and}\qquad |\{i : V_i\in V_{q,e}^{E},i\in 2\bZ+1\}|=p\eqsp.
\]
\end{enumerate}
\end{proof}
%
%

\begin{lemma}
\label{lem:cardX}
Let $n,N\ge1$ and $(\{1,\ldots,N\},E)$ be the round-robin graph ($E=E^{n,N}_{\text{RR}}$).  Then, for all $2\le q\le \ordermax{}{E}-1$, 
\[
|X_{q}^{E}| = n(n-1)\eqsp.
\]
\end{lemma}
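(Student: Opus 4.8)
The plan is to combine the $n$-regularity of the round-robin graph with the translation symmetry of the pairing that is made explicit in Lemma~\ref{lem:ElementsAtDistanceq}. By construction $X_q^{n,N}=X_{q\link q+1}^{n,N}\cup X_{q+1\link q+1}^{n,N}$ is a disjoint union, so $\absj{X_q^{n,N}}=\absj{X_{q\link q+1}^{n,N}}+\absj{X_{q+1\link q+1}^{n,N}}$, and for $2\le q\le\ordermax{n}{N}-1$ both $V_q^{n,N}$ and $V_{q+1}^{n,N}$ are ``generic'' levels of cardinality $2(n-1)$ by Lemma~\ref{lem:ElementsAtDistanceq}. The first step is to show that, on this range, the crossing count $\absj{X_{p\link p+1}^{n,N}}$ and the internal count $\absj{X_{p\link p}^{n,N}}$ do not depend on $p$; I will call these constants $a$ and $b$, so that $\absj{X_q^{n,N}}=a+b$ for every $q$ in the range and it only remains to identify the constant.

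To pin down $a+b$ I would use the degree identity at an interior level $p$, i.e.\ one with $3\le p\le\ordermax{n}{N}-1$ so that $V_{p-1}^{n,N},V_p^{n,N},V_{p+1}^{n,N}$ are all generic. Each node has degree $n$ and, since $V_p^{n,N}$ is a graph-distance sphere, every edge incident to it reaches $V_{p-1}^{n,N}$, $V_p^{n,N}$, or $V_{p+1}^{n,N}$; counting internal edges twice,
\[
2n(n-1)=n\absj{V_p^{n,N}}=\absj{X_{p-1\link p}^{n,N}}+2\absj{X_{p\link p}^{n,N}}+\absj{X_{p\link p+1}^{n,N}}=2a+2b\eqsp,
\]
whence $a+b=n(n-1)$ and $\absj{X_q^{n,N}}=n(n-1)$. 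The constancy of $a$ and $b$ is where the symmetry enters: \eqref{def:Vq} shows that $V_{p+1}^{n,N}$ is the image of $V_p^{n,N}$ under the label translation $i\mapsto i+2(n-1)$, while the opponent rule \eqref{eq:Opponents} assigns to a generic odd (resp.\ even) label the fixed offsets $+4x+1$ (resp.\ $-4x-1$), $x=0,\dots,n-1$, which are invariant under this translation. The shift therefore maps edges to edges and induces bijections $X_{p\link p}^{n,N}\to X_{p+1\link p+1}^{n,N}$ and $X_{p\link p+1}^{n,N}\to X_{p+1\link p+2}^{n,N}$; chaining them strictly inside the generic range, so that the special levels $V_0^{n,N}$, $V_1^{n,N}$, $V_{\ordermax{n}{N}+1}^{n,N}$ are never invoked, gives the independence of $p$.

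The main obstacle is the rigorous treatment of the two ends of the chain, since \eqref{eq:Opponents} holds only away from the exceptional labels $\{N-2x+1:x=1,\dots,2(n-1)\}$ and $\{2x:x=1,\dots,2n-1\}$. The second set already meets the even part of $V_2^{n,N}$ and the first meets the odd part of $V_{\ordermax{n}{N}}^{n,N}$, so the translation cannot be applied blindly at either end of the range $2\le q\le\ordermax{n}{N}-1$. The way I would make the argument robust is to register each edge of $X_q^{n,N}$ from a \emph{generic} endpoint and to argue, using the explicit intervals of \eqref{def:Vq}, that the anomalous pairings of the exceptional labels only create edges lying \emph{outside} $X_q^{n,N}$, namely edges internal to $V_1^{n,N}$ at the bottom and edges reaching $V_{\ordermax{n}{N}+1}^{n,N}$ at the top; the counts $a$ and $b$ are then genuinely unaffected by the boundary, the edge case $q=2$ included. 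If one prefers to bypass the symmetry entirely, the same value is obtained by a direct enumeration: substituting the explicit intervals of \eqref{def:Vq} into \eqref{eq:Opponents} and counting admissible offsets inside $V_q^{n,N}\cup V_{q+1}^{n,N}$ reduces $a$ and $b$ to elementary parity sums adding up to $n(n-1)$, at the cost of a longer but routine computation.
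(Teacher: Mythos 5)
Your strategy---combining the degree identity $n\absj{V_p^{n,N}} = \absj{X_{p-1\link p}^{n,N}} + 2\absj{X_{p\link p}^{n,N}} + \absj{X_{p\link p+1}^{n,N}}$ with translation invariance of the generic pairing rule---is genuinely different from the paper's proof, which simply enumerates $\absj{X_{q\link q}^{n,N}}$ and $\absj{X_{q\link q+1}^{n,N}}$ by parity classes and adds the two counts. It is an attractive shortcut, but as written it has a genuine gap: the anchor level you need, a $p$ with $3\le p\le \ordermax{n}{N}-1$ such that $V_{p-1}^{n,N}$, $V_p^{n,N}$, $V_{p+1}^{n,N}$ are all generic, need not exist. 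Under the paper's standing hypothesis $2\le n<N/4$ one can have $\ordermax{n}{N}=3$ (e.g.\ $n=3$, $N=14$, or $n=5$, $N=26$), in which case the lemma still has content (it asserts $\absj{X_2^{n,N}}=n(n-1)$) but the interval $[3,\ordermax{n}{N}-1]$ is empty. The only remaining degree identity is at $p=\ordermax{n}{N}$, and it involves $\absj{X_{\ordermax{n}{N}\link \ordermax{n}{N}+1}^{n,N}}$, the crossing count into the non-generic last level $V_{\ordermax{n}{N}+1}^{n,N}$ (whose cardinality is $n-1+2\remainder{n}{N}$, not $2(n-1)$); nothing identifies that count with your constant $a$, so the system of equations cannot be closed. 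The same degeneracy already bites at $\ordermax{n}{N}=4$: there your chain of strictly-generic translation bijections is empty, and the whole argument rests on the boundary levels.

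Second, the constancy you need at those boundary levels---$\absj{X_{2\link 3}^{n,N}}=a$, $\absj{X_{\ordermax{n}{N}-1\link \ordermax{n}{N}}^{n,N}}=a$ and $\absj{X_{\ordermax{n}{N}\link \ordermax{n}{N}}^{n,N}}=b$---is exactly where the exceptional labels of \eqref{eq:Opponents} live (the even part of $V_2^{n,N}$ and the odd part of $V_{\ordermax{n}{N}}^{n,N}$), and your claim that their anomalous pairings ``only create edges lying outside $X_q^{n,N}$'' is asserted, not proved; it is also slightly misstated, since at the bottom these anomalous edges go from $V_2^{n,N}$ into $V_1^{n,N}$, i.e.\ they lie in $X_{1\link 2}^{n,N}$, not internal to $V_1^{n,N}$. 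The claim is in fact true, but verifying it requires determining the actual wrap-around opponents of the exceptional labels, which is precisely the explicit bookkeeping that constitutes the paper's proof---so the symmetry argument defers, rather than avoids, the combinatorial core. To turn your outline into a complete proof you would need (i) that boundary edge analysis carried out, and (ii) a separate argument (for instance the direct enumeration you mention only as a fallback) covering the small values $\ordermax{n}{N}\in\{3,4\}$.
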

\begin{proof}
The proof essentially consists in building the graphical model of Figure~\ref{fig:robin:graphicalmodel} from  the one displayed in Figure~\ref{fig:generic:graphicalmodel}.

\begin{figure}[h!]
\centering
\begin{tikzpicture}
\tikzstyle{player}=[circle, minimum size = 9mm, thick, draw =black!80, node distance = 8mm]
\tikzstyle{game}=[rectangle, minimum size = 9mm, thick, draw =black!80, node distance = 9mm]
\tikzstyle{connect}=[-latex, thick]
\tikzstyle{box}=[rectangle, draw=black!100]
\node[game, fill = black!10] (v1){$X^{E}_{0\link1,e}$};
\node[player] (v2) [below=of v1] {$V^{E}_{0}$};
\node[game, fill = black!10] (v3) [below=of v2] {$X^{E}_{0\link1,o}$};
\node[player] (v4) [right=of v1] {$V^{E}_{1,e}$ };
\node[player] (v5) [right=of v3] {$V^{E}_{1,o}$ };
\node[game, fill = black!10] (v6) [above=of v4] {$X^{E}_{1\link1,e}$};
\node[game, fill = black!10] (v7) [below=of v5] {$X^{E}_{1\link1,o}$};

\path (v2) edge [connect] (v3);  
\path (v2) edge [connect] (v1);  
\path (v4) edge [connect] (v1);  
\path (v4) edge [connect] (v6);  
\path (v5) edge [connect] (v3);  
\path (v5) edge [connect] (v7);  

\node[game, fill = black!10] (v8) [below right =of v4,yshift=+.2cm] {$X^{E}_{1\link 2,e}$};
\node[game, fill = black!10] (v9) [above right =of v5,yshift=-.2cm] {$X^{E}_{1\link 2,o}$};
\node[player] (v10) [above right=of v8,yshift=-.2cm] {$V^{E}_{2,e}$};
\node[player] (v11) [below right=of v9,yshift=+.2cm] {$V^{E}_{2,o}$};
\node[game, fill = black!10] (v12) [above=of v10] {$X^{E}_{2\link 2,e}$};
\node[game, fill = black!10] (v13) [below=of v11] {$X^{E}_{2\link2,o}$};

\path (v10) edge [connect] (v12);  
\path (v10) edge [connect] (v8);  
\path (v11) edge [connect] (v13);  
\path (v11) edge [connect] (v9);  
\path (v4) edge [connect] (v8);  
\path (v5) edge [connect] (v9);  

\node[player] (v14) [right=of v10,xshift=.7cm] {$V^{E}_{\ordermax{}{E},e}$};
\node[player] (v15) [right=of v11,xshift=.7cm] {$V^{E}_{\ordermax{}{E},o}$};

\path (v10) -- node[auto=false]{\ldots}  (v14); 
\path (v11) -- node[auto=false]{\ldots}  (v15); 

\node[game, fill = black!10] (v16) [above=of v14] {$X^{E}_{\ordermax{}{E}\link\ordermax{}{E},e}$};
\node[game, fill = black!10] (v17) [below=of v15] {$X^{E}_{\ordermax{}{E}\link\ordermax{}{E},o}$};

\path (v14) edge [connect] (v16);  
\path (v15) edge [connect] (v17);  

\node[game, fill = black!10] (v18) [below right=of v14,yshift=.3cm] {$X^{E}_{\ordermax{}{E}\link\ordermax{}{E}+1,e}$};
\node[game, fill = black!10] (v19) [above right=of v15,yshift=-.3cm] {$X^{E}_{\ordermax{}{E}\link\ordermax{}{E}+1,o}$};

\path (v14) edge [connect] (v18);  
\path (v15) edge [connect] (v19);  

\node[player] (v20) [right=of v19,yshift=.6cm] {$V^{E}_{\ordermax{}{E}+1}$};

\path (v20) edge [connect] (v18);  
\path (v20) edge [connect] (v19); 

\node[game, fill = black!10] (v21) [above=of v20] {$X^{E}_{\ordermax{}{E}+1\link\ordermax{}{E}+1}$};

\path (v20) edge [connect] (v21); 
  
\end{tikzpicture}
\caption{Graphical model of the round-robin algorithm.}
\label{fig:robin:graphicalmodel}
\end{figure}
\noindent Edges involving the first node are decomposed as:
\[
X_{0\link1,e}^{E}=\{X_{1,4x} : x=1,\ldots,\PE{n/2}\}=\{X_{1,i} : V_i\in V_{1,e}^{E}\}\quad\mbox{and}\quad X_{0\link 1,o}^{E}=\{X_{1,i} : V_i\in V_{1,o}^{E}\}\eqsp.
\]
Edges involving nodes in $V_{1}^{E}$ that are both different from $1$ are described as follows.
\begin{enumerate}[-]
\item  Edges between two nodes in $V_{1}^{E}$ denoted by: 
\begin{align*}
X_{1\link1,e}^{E}&=\{X_{4x,4y} : (x,y)\in [\PE{n/2}], x< y\}=\{X_{i,j} : V_i,V_j\in V_{1,e}^{E},i<j\}\eqsp,\\
X_{1\link1,o}^{E}&=\{X_{i,j} : V_i,V_j\in V_{1,o}^{E},i<j\}\eqsp. 
\end{align*}
Note that there is no edge between any $V_i\in V_{1,e}^{E}$ and a node $V_j\in V_{q,o}^{E}$ for any $q\ge 1$. In particular, there is no edge between any $V_i\in V_{1,e}^{E}$ and  $V_j\in V_{1,o}^{E}$.  Therefore, $X_{1\link1,e}^{E}\cup X_{1\link1,o}^{E}$ describes all edges between nodes in $V_{1}^{E}$. 
\item Edges between $V_i\in V_{1}^{E}$ and $V_j\in V_{2}^{E}$ are described as follows:
\begin{align*}
X_{1\link2,e}^{E}&=\{X_{4y-1-4k,4y} : y\in [\PE{n/2}], k<y\}\cup\{X_{4x,4y} : x\in[\PE{n/4}], y\in[\PE{n/2}+1,n-x]\}\\
&=\{X_{i,j} : V_i\in V_{1,e}^{E},V_j\in V_{2,e}^{E},j\in 2\bZ+1,j>i\}\\
&\hspace{5cm}\cup\{X_{i,j} : V_i\in V_{1,e}^{E},V_j\in V_{2,e}^{E},j\in 2\bZ\cap [4n-i]\}\eqsp,\\
X_{1\link2,o}^{E}&=\{X_{i,j} : V_i\in V_{1,o}^{E},V_j\in V_{2,o}^{E},j\in 2\bZ+1,j>i\}\\
&\hspace{5cm}\cup\{X_{i,j} : V_i\in V_{1,o}^{E},V_j\in V_{2,o}^{E},j\in 2\bZ\cap [4n-i]\}\eqsp.
\end{align*}
\end{enumerate}
By \eqref{eq:Opponents}, for any $q\in [2,\ordermax{}{E}]$, edges between $V_i$ and $V_j$ both in $V_{q}^{E}$ are:
\begin{align*}
\notag X_{q \link q,e}^{E}&=\{X_{i,j} : V_i\in V_{q,e}^{E},i\in 2\bZ+1, V_j\in V_{q,e}^{E},j\in 2\bZ\}\eqsp,\\
X_{q\link q,o}^{E}&=\{X_{i,j} : V_i\in V_{q,o}^{E},i\in 2\bZ+1, V_j\in V_{q,o}^{E},j\in 2\bZ\}\eqsp.
\end{align*}
Note that \eqref{eq:Opponents} shows also that there is no edge between $V_i\in V_{q,e}^{E}$ and $V_j\in V_{q,o}^{E}$. For all $2\le q\le \ordermax{}{E}$ and all $V_i\in V_{q}^{E}$ and $V_j\in V_{q+1}^{E}$, 
\begin{align*}
X_{q\link q+1,e}^{E}=&\{X_{i,j} : V_i\in V_{q,e}^{E},i\in(2\bZ+1),V_j\in V_{q+1,e}^{E},j\in 2\bZ\cap [i+4n-3]\}\\
&\hspace{3cm}\cup\{X_{i,j} : V_i\in V_{q,e}^{E},i\in 2\bZ,V_j\in V_{q+1,e}^{E},j\in 2\bZ+1\cap [i]\}\eqsp,\\
X_{q\link q+1,o}^{E}=&\{X_{i,j} : V_i\in V_{q,o}^{E},i\in(2\bZ+1),V_j\in V_{q+1,o}^{E},j\in2\bZ\cap [i+4n-3]\}\\
&\hspace{3cm}\cup\{X_{i,j} : V_i\in V_{q,o}^{E},i\in2\bZ,V_j\in V_{q+1,o}^{E},j\in( 2\bZ+1)\cap [i]\}\eqsp.
\end{align*}
Therefore, for all $2\le q\le \ordermax{}{E}$,
\begin{align*}
|X_{q\link q,e}^{E}|&=|\{i : V_i\in V_{q,e}^{E},i\in 2\bZ+1\}||\{j: V_j\in V_{q,e}^{E},j\in 2\bZ\}|\\
&=
\begin{cases}
 p^2&\text{ if }n-1=2p\eqsp,\\
 p(p+1)&\text{ if }n-1=2p+1\eqsp.
\end{cases}
\end{align*}
The same holds for $|X_{q\link q,o}^{E}|$ so that $|X_{q\link q}^{E}| = 2p^2$ if $n-1=2p$ and $|X_{q\link q}^{E}| = 2p(p+1)$ if $n-1=2p+1$. On the other hand,
\begin{align*}
|X_{q\link q+1,e}^{E}|=&\sum_{i : V_i\in V_{q,e}^{E},\,i\in(2\bZ+1)}|\{j : V_j\in V_{q+1,e}^{E}j\in 2\bZ\cap [i+4n-3]\}\\
&\hspace{3cm}+\sum_{i : V_i\in V_{q,e}^{E},\,i\in 2\bZ}|\{j: V_j\in V_{q+1,e}^{E},j\in 2\bZ+1\cap [i]\}|\\
&\hspace{-.37cm}=
\begin{cases}
2\sum_{i=1}^pi=p(p+1) &\text{ if }n-1=2p\eqsp,\\
\sum_{i=1}^pi+\sum_{i=1}^{p+1}i=(p+1)^2 &\text{ if }n-1=2p+1\eqsp.
\end{cases}
\end{align*}
As the same holds for $|X_{q\link q+1,o}^{E}|$, $|X_{q\link q+1}^{E}| = 2p(p+1)$ if $n-1=2p$ and $|X_{q\link q+1}^{E}| = 2(p+1)^2$ if $n-1=2p+1$. The proof is completed by writing $|X_{q}^{E}| = |X_{q\link q+1}^{E}|  + |X_{q+1\link q+1}^{E}|$.
\end{proof}

\section{Probabilistic study of the graphical model}\label{sec:ProbTools}

This section analyses stochastic processes whose conditional dependences are encoded in the graphical model of Figure~\ref{fig:generic:graphicalmodel}. To ease applications of these general results to our problem, we focus on a restricted class of such stochastic processes.

Let $\nobs\in\bN\setminus\{0\}$, $\pi_V$ be a distribution on a measurable space $\Vset$ and $\Xset$ be a discrete space. Let $K_i$ denote non-negative functions defined on $\Xset\times\Vset^2$ such that all $K_i(.,v,w)$ are probability distributions on $\Xset$. Let $\bP_{\pi_V}$ be the distribution on $\Vset^{\nobs+1}\times\Xset^{\nobs}$ defined by: 
\begin{equation}
\label{eq:law:chain}
\bP_{\pi_V}\pa{V_{1:\nobs+1}\in A_{1:\nobs+1},X_{1:\nobs}}
=\int\prod_{i=1}^{\nobs+1}\un{A_i}(v_i)\prod_{i=1}^{\nobs+1}\pi_V(\rmd v_i)\prod_{i=1}^{\nobs}K_i(X_i,v_i,v_{i+1})\eqsp.
\end{equation}
The random variables $(V_i)_{i\in \{1,\ldots,\nobs+1\}}$ are i.i.d. taking values in $\Vset$ with common distribution $\pi_V$ and $(X_i)_{i\in \{1,\ldots\nobs\}}$ is a stochastic process taking values in a discrete set $\Xset$ such that $(X_i)_{i\in \{1,\ldots,\nobs\}}$ are independent conditionally on $V$ and
\[
\bP_{\pi_V}(X_i=x|V_{1:\nobs+1})=\bP_{\pi_V}(X_i=x|V_{i},V_{i+1}) =K_i(x,V_{i},V_{i+1}),\qquad \forall i\in \{1,\nobs\},\forall x\in \Xset\eqsp.
\]
Therefore, $\bP_{\pi_V}$ is a generic probability distribution with conditional dependences encoded by the graphical model of Figure~\ref{fig:generic:graphicalmodel}.
Assume that there exist $\nu_i>0$ such that
\begin{equation}\label{Hyp:MinK_i}
\nu_i\le K_i(x,v,w)\le 1,\qquad \forall x\in\Xset,\forall i\in\bZ,\forall v,w\in\Vset\eqsp. 
\end{equation}
For some results, the following assumption is required.
\begin{equation}\label{Hyp:Stat}
\forall i\in\{1,\ldots,\nobs\},\qquad K_i=K\eqsp. 
\end{equation}
Whenever Assumption \eqref{Hyp:Stat} holds, we shall denote by $\nu$ a real number such that 
\begin{equation*}
\nu\le K(x,v,w)\le 1,\qquad \forall x\in\Xset,\forall v,w\in\Vset\eqsp. 
\end{equation*}
Note that by \eqref{eq:law:chain}, the sequence $(V_{k+1},X_{k})_{k\ge 0}$ is a Markov chain with transition kernel on $\Vset\times\Xset$ such that: 
\begin{align*}
\bP_{\pi_V}\pa{V_{k+1}\in A,X_k\middle|V_{k},X_{k-1}}&=\int\un{A}(v_{k+1})\pi_V(\rmd v_{k+1})K_k(X_k,V_k,v_{k+1})\ge \nu_k \pi_V(A)\eqsp.
\end{align*}
This uniform minoration condition ensures that the joint Markov chain $(V_{k+1},X_{k})_{k\ge 0}$ is geometrically ergodic and admits the whole space $\Vset\times\Xset$ as small set. Note also that, as defined by \eqref{eq:law:chain}, $\bP_{\pi_V}$ is the law of this Markov chain started from stationarity, the stationary distribution on $\Vset\times\Xset$ being $(A,x_0)\mapsto \int \un{A}(v_1)\pi_V(\rmd v_1) \pi_V(\rmd v_0)\condlik(x_0,v_0,v_1)$.

Lemma~\ref{lem:minorizationGen} first shows that, conditionally on the observations, $V_1,\ldots,V_{\nobs}$ is a backward Markov chain admitting the all state space as small set.
\begin{lemma}
\label{lem:minorizationGen}
For any $q\ge 1$, conditionally on $X_{q:\nobs}$, $(V_{\nobs},\ldots,V_1)$ is a Markov chain. Its transition kernels $(K^{V|X}_{\pi_V,k,q})_{q\le k <\nobs}$ are such that, for all $q\le k < \nobs$, there exists a measure $\mu_{k,q}$ satisfying for all measurable set $A$:
\begin{align*}
K^{V|X}_{\pi_V,k,q}(V_{k+1},A) = \bP_{\pi_V}\pa{V_{k}\in A\middle|V_{k+1:\nobs},X_{q:\nobs}}  &= \bP_{\pi_V}\pa{V_{k}\in A\middle|V_{k+1},X_{q:\nobs}}\ge \nu_k \mu_{k,q}(A)\eqsp.
\end{align*}
On the other hand, for all $1\le k <q$,
\[
K^{V|X}_{\pi_V,k,q}(V_{k+1},A) = \bP_{\pi_V}\pa{V_{k}\in A\middle|V_{k+1:\nobs},X_{q:\nobs}} = \pi_V(A)\eqsp.
\]
\end{lemma}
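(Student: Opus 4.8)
The plan is to prove Lemma~\ref{lem:minorizationGen} by exploiting the Markov structure encoded in Figure~\ref{fig:generic:graphicalmodel} and reading off conditional independences from graph separation, then producing an explicit minorization from Assumption H\ref{assum:min}. First I would establish the backward Markov property: conditionally on $X_{q:\nobs}$, the sequence $(V_{\nobs},\ldots,V_1)$ is a Markov chain. This follows from the factorization \eqref{eq:law:chain} together with the conditional independence structure: given $V_{k+1}$ and the observations $X_{q:\nobs}$, the variable $V_k$ is independent of $V_{k+2:\nobs+1}$ because, in the graphical model, every path from $V_k$ to those later nodes must pass through $V_{k+1}$ or through observation nodes that are themselves blocked once we condition on the relevant $X$'s. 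Concretely, I would write the joint density of $V_{1:\nobs+1}$ given $X_{q:\nobs}$ as proportional to $\prod_i \pi_V(\rmd v_i)\prod_{i=q}^{\nobs}K_i(x_i,v_i,v_{i+1})$ and observe that it factorizes into a product of pairwise interaction terms, which is exactly a (time-inhomogeneous) Markov chain structure when read in either direction.

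Next I would identify the transition kernels. For indices $q\le k<\nobs$, integrating out $V_{1:k-1}$ in the conditional density shows that $\bP_{\pi_V}(V_k\in A\mid V_{k+1:\nobs},X_{q:\nobs})$ depends on the future only through $V_{k+1}$ (the $X$ dependence sits in the normalizing interaction terms), giving the claimed reduction to $\bP_{\pi_V}(V_k\in A\mid V_{k+1},X_{q:\nobs})$. Writing this kernel explicitly, it has the form
\[
K^{V|X}_{\pi_V,k,q}(V_{k+1},A)=\frac{\int_A \pi_V(\rmd v_k)K_k(X_k,v_k,V_{k+1})\,\Phi_k(v_k)}{\int \pi_V(\rmd v_k)K_k(X_k,v_k,V_{k+1})\,\Phi_k(v_k)}\eqsp,
\]
where $\Phi_k$ is the backward message $\Phi_k(v_k)=\bP_{\pi_V}(X_{q:k-1}\mid V_k=v_k)$ collecting all observations with index below $k$ (for $k=q$ one has $\Phi_q\equiv1$). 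The key point is that both the numerator integrand and denominator integrand share the factor $K_k(X_k,v_k,V_{k+1})\in[\nu_k,1]$ by H\ref{assum:min}.

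The minorization is then obtained by bounding $K_k(X_k,v_k,V_{k+1})\ge\nu_k$ in the numerator and $K_k(X_k,v_k,V_{k+1})\le 1$ in the denominator, which yields
\[
K^{V|X}_{\pi_V,k,q}(V_{k+1},A)\ge \nu_k\,\frac{\int_A \pi_V(\rmd v_k)\Phi_k(v_k)}{\int \pi_V(\rmd v_k)\Phi_k(v_k)}\eqsp,
\]
so setting $\mu_{k,q}(A)$ equal to the (properly normalized) probability measure $A\mapsto \int_A \pi_V(\rmd v_k)\Phi_k(v_k)\big/\int \pi_V(\rmd v_k)\Phi_k(v_k)$ gives the asserted bound $\ge\nu_k\mu_{k,q}(A)$ with $\mu_{k,q}$ independent of $V_{k+1}$. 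Finally, for $1\le k<q$ no observation involving $V_k$ is retained in the conditioning set $X_{q:\nobs}$, so $V_k$ is conditionally independent of everything and its conditional law collapses back to the prior $\pi_V$; this is immediate from the factorization since the only interaction terms surviving in the relevant range no longer couple $V_k$. I expect the main obstacle to be the careful bookkeeping in the first two steps—verifying that the backward messages $\Phi_k$ depend only on $v_k$ and that the forward variables $V_{k+1:\nobs}$ genuinely drop out except through $V_{k+1}$—which requires a precise reading of the separation properties of Figure~\ref{fig:generic:graphicalmodel}; once the explicit kernel is in hand the minorization itself is routine.
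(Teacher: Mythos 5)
Your proposal is correct and follows essentially the same route as the paper: the same explicit formula for the backward kernel (your $\Phi_k$ is exactly the paper's backward message $\bP_{\pi_V}(X_{q:k-1}\mid v_k)$ with the convention $\Phi_q\equiv 1$), the same minorization by bounding $K_k$ below by $\nu_k$ in the numerator and above by $1$ in the denominator, and the same choice of $\mu_{k,q}$ as the normalized measure $\pi_V\Phi_k$. The only difference is cosmetic: you spell out the backward Markov property via the factorization, which the paper dismisses as immediate.
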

\begin{proof}
The Markov property is immediate. The case $1\le k <q$ follows from the independence of $V_k$ and $(V_{k+1:\nobs},X_{q:\nobs})$. Then, for any $q\le k < \nobs$ and all measurable set $A$,
\begin{align*}
\bP_{\pi_V}\pa{V_{k}\in A\middle|V_{k+1:\nobs},X_{q:\nobs}} &= \bP_{\pi_V}\pa{V_{k}\in A\middle|V_{k+1},X_{q:k}}\\
&= \frac{\int \1_{A}(v_{k})\pi_V(\rmd v_{k})K_k(X_{k},v_{k},V_{k+1})\bP_{\pi_V}(X_{q:k-1}|v_{k})}{\int \pi_V(\rmd v_{k})K_k(X_{k},v_{k},V_{k+1})\bP_{\pi_V}(X_{q:k-1}|v_{k})}\eqsp,
\end{align*}
with the conventions $\bP_{\pi_V}(X_{q:q-1}|V_{q})=1$. By Assumption \ref{assum:strongmix},
\[
\bP_{\pi_V}\pa{V_{k}\in A\middle|V_{k+1},X_{q:\nobs}}
\ge \nu_k\frac{\int \1_{A}(v_{k})\pi_V(\rmd v_{k})\bP_{\pi_V}(X_{q:k-1}|v_{k})}{\int \pi_V(\rmd v_{k})\bP_{\pi_V}(X_{q:k-1}|v_{k})}\eqsp. 
\]
The proof is then completed by choosing:
\[
\mu_{k,q}(A) = \frac{\int \1_{A}(v_{k})\pi_V(\rmd v_{k})\bP_{\pi_V}(X_{q:k-1}|v_{k})}{\int \pi_V(\rmd v_{k})\bP_{\pi_V}(X_{q:k-1}|v_{k})}\eqsp. 
\]
\end{proof}

\noindent
Lemma~\ref{lem:totalvariationGen} shows the contraction properties of the Markov kernel of the chain $V$ conditionally on the observations. It is a direct consequence of the minoration condition given in Lemma~\ref{lem:minorizationGen}, see for instance \cite[Sections III.9 to III.11]{lindvall:1992} or \cite[Corollary~4.3.9 and Lemma~4.3.13]{cappe:moulines:ryden:2005}. Let $\|\cdot\|_{\mathsf{tv}}$ be the total variation norm defined, for any measurable set $(\mathsf{Z},\mathcal{Z})$ and any finite signed measure $\xi$ on $(\mathsf{Z},\mathcal{Z})$, by
\[
\|\xi\|_{\mathsf{tv}} = \sup\left\{\int f(z) \xi(\rmd z) \;;\;f\; \mbox{measurable real function on\;}\mathsf{Z}\;\mbox{such that\;}\|f\|_{\infty} = 1\right\}\eqsp.
\]
\begin{lemma}
\label{lem:totalvariationGen}
For all measures $\mu_1$, $\mu_2$ and all $1\le q\le k < \nobs$,
\[
\left\|\int \mu_1 (\rmd x)K^{V|X}_{\pi_V,k,q}(x,\cdot) - \int \mu_2 (\rmd x)K^{V|X}_{\pi_V,k,q}(x,\cdot) \right\|_{\mathsf{tv}}\le \left(1-\nu_k\right)\|\mu_1-\mu_2\|_{\mathsf{tv}}\le \left(1-\nu_k\right)\eqsp.
\]
In particular, by induction,
\begin{equation}
\label{eq:exp:forgettingGen}
\left\|\int \left\{\mu_1(\rmd v_{\nobs})-\mu_2(\rmd v_{\nobs})\right\} K^{V|X}_{\pi_V,\nobs-1,q}(v_{\nobs},\rmd v_{\nobs-1})\ldots K^{V|X}_{\pi_V,k,q}(v_{k+1},\cdot)\right\|_{\mathsf{tv}}\le \prod_{i=k}^{\nobs-1}\left(1-\nu_i\right)\eqsp.
\end{equation}
\end{lemma}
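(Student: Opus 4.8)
The plan is to prove the one-step contraction first and then deduce the iterated bound \eqref{eq:exp:forgettingGen} by induction. Everything rests on the Doeblin-type minorization supplied by Lemma~\ref{lem:minorizationGen}: for every $q\le k<\nobs$ there is a \emph{probability} measure $\mu_{k,q}$ (it integrates to one, being the normalized measure written at the end of the proof of Lemma~\ref{lem:minorizationGen}, whose denominator is positive under H\ref{assum:min}) such that $K^{V|X}_{\pi_V,k,q}(v,\cdot)\ge\nu_k\mu_{k,q}(\cdot)$ uniformly in $v$. The first move is to split the kernel into a coalescing part and a residual. For $\nu_k\in(0,1)$ set
\[
R_{k,q}(v,\cdot)=\frac{1}{1-\nu_k}\pa{K^{V|X}_{\pi_V,k,q}(v,\cdot)-\nu_k\mu_{k,q}(\cdot)}\eqsp.
\]
The minorization makes $R_{k,q}(v,\cdot)$ a non-negative measure, and since both $K^{V|X}_{\pi_V,k,q}(v,\cdot)$ and $\mu_{k,q}$ have total mass one, $R_{k,q}$ is in fact a Markov kernel; the degenerate case $\nu_k=1$ forces the kernel to be constant, for which the contraction is immediate. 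This yields $K^{V|X}_{\pi_V,k,q}(v,\cdot)=\nu_k\mu_{k,q}(\cdot)+(1-\nu_k)R_{k,q}(v,\cdot)$.

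The key observation is that $\mu_1-\mu_2$ is a \emph{balanced} signed measure, i.e. $(\mu_1-\mu_2)(\Vset)=0$. Integrating the decomposition against $\mu_1-\mu_2$, the part $\nu_k\mu_{k,q}$ is constant in $v$ and contributes $\nu_k\,(\mu_1-\mu_2)(\Vset)\,\mu_{k,q}(\cdot)=0$, so that
\[
\int\pa{\mu_1-\mu_2}(\rmd v)\,K^{V|X}_{\pi_V,k,q}(v,\cdot)=(1-\nu_k)\int\pa{\mu_1-\mu_2}(\rmd v)\,R_{k,q}(v,\cdot)\eqsp.
\]
Taking $\|\cdot\|_{\mathsf{tv}}$ and using that a Markov kernel is non-expansive for this norm (for $\|f\|_\infty\le1$ one has $\|R_{k,q}f\|_\infty\le1$, whence $|\xi R_{k,q}(f)|=|\xi(R_{k,q}f)|\le\|\xi\|_{\mathsf{tv}}$ for any signed measure $\xi$) gives the first inequality of the lemma; the second follows by bounding the total variation distance between the two probability measures by one.

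Finally, \eqref{eq:exp:forgettingGen} follows by induction on the number of kernels. The point that makes the induction go through is that the image of a balanced signed measure by $K^{V|X}_{\pi_V,k,q}$ is again balanced, so the one-step bound applies repeatedly: starting from $\mu_1-\mu_2$ and composing the kernels of indices $i$ running from $\nobs-1$ down to $k$ (all with $i\ge k\ge q$, hence all in the minorization regime), each kernel contributes a factor $(1-\nu_i)$, and the initial measure contributes $\|\mu_1-\mu_2\|_{\mathsf{tv}}\le1$, producing the product $\prod_{i=k}^{\nobs-1}(1-\nu_i)$.

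The argument is entirely standard, being the analytic form of the Doeblin coupling, so there is no genuine ``hard part'': the only care needed is the bookkeeping, namely checking that $\mu_{k,q}$ is a probability measure and $R_{k,q}$ a Markov kernel, and above all tracking the zero-total-mass property, which is precisely what lets the minorizing mass $\nu_k$ be extracted as a multiplicative contraction factor rather than an additive error. An equivalent route phrases this as a coupling: with probability $\nu_k$ both copies of the chain jump to a common point drawn from $\mu_{k,q}$ (coalescence) and otherwise move according to $R_{k,q}$, so the two chains agree after one step with probability at least $\nu_k$ and their total variation distance is at most the non-coalescence probability $1-\nu_k$; this is the form found in \cite{lindvall:1992,cappe:moulines:ryden:2005} cited in the statement.
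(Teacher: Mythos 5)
Your proof is correct and takes essentially the same route as the paper: the paper gives no written-out proof of this lemma, deriving it instead from the minorization in Lemma~\ref{lem:minorizationGen} with a pointer to \cite[Sections III.9 to III.11]{lindvall:1992} and \cite[Corollary~4.3.9 and Lemma~4.3.13]{cappe:moulines:ryden:2005}, and those references contain precisely the Doeblin decomposition/coupling argument you spell out. Your write-up correctly supplies the two points the citation hides --- that $\mu_{k,q}$ is a probability measure so $R_{k,q}$ is a Markov kernel, and that the zero total mass of $\mu_1-\mu_2$ (preserved under Markov kernels) is what converts the minorizing mass $\nu_k$ into a multiplicative contraction factor in the induction.
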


\noindent
Lemma~\ref{lem:exp:forgetGen} proves a key loss of memory property of the backward chain $X_q$, with geometric rate of convergence. Whenever it is necessary, we adopt the convention  $\prod_{k=\ell}^{m}a_k = 1$ for any $(a_{\ell},\ldots,a_m)$ and any $\ell>m$.
\begin{lemma}
\label{lem:exp:forgetGen}
For any $1\le q\le \nobs-1$,
\begin{align}
\label{eq:bound:loglikGen}
\left|\log \bP_{\pi_V}\left(X_{q}\middle|X_{q+1:\nobs}\right)\right|&\le \log \left(\nu_q^{-1}\right)\eqsp. 
\end{align}
For all $\ell\ge1$,  $1\le q\le \nobs-1$,
\begin{equation}\label{eq:bound:forgetGen}
\absj{\log \bP_{\pi_V}\pa{X_{q}\middle|X_{q+1:\nobs}} - \log \bP_{\pi_V}\pa{X_{q}\middle|X_{q+1:\nobs+\ell}}}
\leq \nu^{-1}_q\prod_{k=q+1}^{\nobs-1}(1-\nu_k) \eqsp.
\end{equation}
\end{lemma}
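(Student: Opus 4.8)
The plan is to reduce both inequalities to a single representation: for $m\in\{\nobs,\nobs+\ell\}$, the one-step predictive probability $\bP_{\pi_V}(X_q\mid X_{q+1:m})$ is an average of the kernel $K_q$ against the conditional law of $V_{q+1}$, after which the contraction estimate \eqref{eq:exp:forgettingGen} does the work. First I would note that, in the graphical model of Figure~\ref{fig:generic:graphicalmodel}, $X_q$ depends only on $(V_q,V_{q+1})$ while $X_{q+1:m}$ depends only on $V_{q+1:m+1}$; since $V_q$ is independent of $V_{q+2:m+1}$ and the $X$'s are conditionally independent given $V$, the blocks $X_q$ and $X_{q+1:m}$ are independent conditionally on $V_{q+1}$. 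Writing $g(v)=\int K_q(X_q,u,v)\pi_V(\rmd u)=\bP_{\pi_V}(X_q\mid V_{q+1}=v)$ and $\mu_m(\cdot)=\bP_{\pi_V}(V_{q+1}\in\cdot\mid X_{q+1:m})$, this yields $\bP_{\pi_V}(X_q\mid X_{q+1:m})=\int g\,\rmd\mu_m$. By H\ref{assum:min} one has $\nu_q\le g\le 1$ pointwise, hence $\nu_q\le \bP_{\pi_V}(X_q\mid X_{q+1:m})\le 1$; taking $m=\nobs$ and using that the logarithm is nonpositive on $(0,1]$ immediately gives \eqref{eq:bound:loglikGen}.

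For \eqref{eq:bound:forgetGen}, set $a=\int g\,\rmd\mu_\nobs$ and $b=\int g\,\rmd\mu_{\nobs+\ell}$. Both lie in $[\nu_q,1]$ by the previous step, so the mean value theorem for $\log$ gives $\absj{\log a-\log b}\le \nu_q^{-1}\absj{a-b}$, and since $\norm{g}_\infty\le 1$ we have $\absj{a-b}=\absj{\int g\,\rmd(\mu_\nobs-\mu_{\nobs+\ell})}\le \norm{\mu_\nobs-\mu_{\nobs+\ell}}_{\mathsf{tv}}$. It therefore remains to control the total variation distance between the two conditional laws of $V_{q+1}$.

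Here I would invoke the backward Markov structure of Lemma~\ref{lem:minorizationGen}: conditionally on $X_{q+1:m}$, the reversed sequence $(V_m,\ldots,V_{q+1})$ is Markov with backward transition $K^{V|X}_{\pi_V,k,q+1}$ from $V_{k+1}$ to $V_k$. The decisive observation is that the formula for this kernel in Lemma~\ref{lem:minorizationGen} involves only the observations $X_{q+1:k}$, so for every $k\le \nobs-1$ it is the same whether one conditions on $X_{q+1:\nobs}$ or on $X_{q+1:\nobs+\ell}$; the extra observations $X_{\nobs+1:\nobs+\ell}$ only alter the law of $V_\nobs$. Unfolding the chain from $V_\nobs$ down to $V_{q+1}$ then writes both $\mu_\nobs$ and $\mu_{\nobs+\ell}$ as the \emph{same} composition $K^{V|X}_{\pi_V,\nobs-1,q+1}\cdots K^{V|X}_{\pi_V,q+1,q+1}$ applied to the two initial laws $\bP_{\pi_V}(V_\nobs\in\cdot\mid X_{q+1:\nobs})$ and $\bP_{\pi_V}(V_\nobs\in\cdot\mid X_{q+1:\nobs+\ell})$. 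Applying \eqref{eq:exp:forgettingGen} then bounds $\norm{\mu_\nobs-\mu_{\nobs+\ell}}_{\mathsf{tv}}$ by $\prod_{k=q+1}^{\nobs-1}(1-\nu_k)$, and combining this with the display above gives \eqref{eq:bound:forgetGen}.

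The main obstacle is precisely this endpoint independence of the backward kernels: one must verify that additional future observations change only the terminal law of $V_\nobs$ while leaving the downstream backward dynamics untouched, so that the discrepancy between the two conditional laws propagates through identical contracting kernels and decays geometrically. Once this is justified via the backward Markov property, everything else is a routine combination of the minorization H\ref{assum:min} and the contraction estimate \eqref{eq:exp:forgettingGen}.
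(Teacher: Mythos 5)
Your proof is correct and follows essentially the same route as the paper: both rest on the backward-kernel decomposition of $\bP_{\pi_V}(X_q\mid X_{q+1:m})$, the observation that the kernels $K^{V|X}_{\pi_V,k,q+1}$ depend only on $X_{q+1:k}$ (so the two conditionings differ only in the law of $V_\nobs$), the contraction bound \eqref{eq:exp:forgettingGen}, and the elementary log-Lipschitz estimate on $[\nu_q,1]$. The only difference is cosmetic — you factor through the total variation distance of the conditional laws of $V_{q+1}$, while the paper bounds the difference of the two predictive probabilities directly — and your explicit justification of the endpoint independence of the backward kernels is a point the paper leaves implicit.
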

\begin{proof}
To prove \eqref{eq:bound:forgetGen}, for $1\le q< \nobs$, note that by Lemma~\ref{lem:minorizationGen},  
\begin{equation}
\label{eq:decomp:prod:kernelGen}
\bP_{\pi_V}\pa{X_{q}\middle|X_{q+1:\nobs}} = \int \bP_{\pi_V}\pa{\rmd v_{\nobs}\middle | X_{q+1:\nobs}} \pa{\prod_{k =q+1}^{\nobs-1}K^{V|X}_{\pi_V,k,q+1}(v_{k+1},\rmd v_k)}\pi_V(\rmd v_q)K_{q}(X_q,v_q,v_{q+1})\eqsp.
\end{equation}
Likewise,
\begin{multline}
\label{eq:decomp:prod:kernel:plusGen}
\bP_{\pi_V}\pa{X_{q}\middle|X_{q+1:\nobs+\ell}} \\
= \int \bP_{\pi_V}\pa{\rmd v_{\nobs}\middle | X_{q+1:\nobs+\ell}} \pa{\prod_{k =q+1}^{\nobs-1}K^{V|X}_{\pi_V,k,q+1}(v_{k+1},\rmd v_k)}\pi_V(\rmd v_q)K_{q}(X_q,v_q,v_{q+1})\eqsp.
\end{multline}
Then, by Lemma~\ref{lem:minorizationGen} and \eqref{eq:exp:forgettingGen}, combining \eqref{eq:decomp:prod:kernelGen} and \eqref{eq:decomp:prod:kernel:plusGen} yields:
\begin{multline*}
\absj{\bP_{\pi_V}\pa{X_{q}\middle|X_{q+1:\nobs+\ell}}-  \bP_{\pi_V}\pa{X_{q}\middle|X_{q+1:\nobs}}}\\
\leq \pa{\prod_{k=q+1}^{\nobs-1}(1-\nu_k)} \sup_{v_{q+1}\in\Vset}\absj{\int\pi_V(\rmd v_q)K_{q}(X_q,v_q,v_{q+1})}
\le\prod_{k=q+1}^{\nobs-1}(1-\nu_k)\eqsp.
\end{multline*}
\eqref{eq:bound:forgetGen} is then a direct consequence of \eqref{eq:decomp:prod:kernelGen}, \eqref{eq:decomp:prod:kernel:plusGen} and the fact that 
for all $x,y>0$, $|\log x - \log y| \le |x-y|/x\wedge y$.  Inequality \eqref{eq:bound:loglikGen} follows from \eqref{eq:decomp:prod:kernelGen}. 
\end{proof}

\noindent
Lemma~\ref{lem:IncrementsGen} is the crucial result to bound the increments of the log-likelihood.

\begin{lemma}\label{lem:IncrementsGen}
For all distributions $\pi_V,\pi'_V\in\Pi\cup\{\pi^{\star}\}$ and any $1\le q\le \nobs$,
\begin{multline*}
\absj{\log \bP_{\pi_V}(X_{q}|X_{q+1:\nobs}) - \log \bP_{\pi'_V}(X_{q}|X_{q+1:\nobs})} \\
\leq 2\sum_{\ell= 0}^{\nobs+1-q}(\nu_{q}\nu_{q+\ell-1}\nu_{q+\ell})^{-1}\pa{\prod_{k = q+1}^{q+\ell-1}(1-\nu_k)}\|\pi_V-\pi'_V\|_{\mathsf{tv}}\eqsp.
\end{multline*}
\end{lemma}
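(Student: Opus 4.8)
The plan is to reduce the difference of logarithms to a difference of the conditional probabilities themselves, represent each conditional probability through the backward chain of Lemma~\ref{lem:minorizationGen}, and then compare the two representations by changing the underlying distribution one coordinate at a time, letting the contraction of Lemma~\ref{lem:totalvariationGen} damp the contribution of far-away coordinates. First I would use the elementary inequality $\absj{\log x - \log y}\le \absj{x-y}/(x\wedge y)$ together with the uniform lower bound $\bP_{\pi_V}(X_q|X_{q+1:\nobs})\ge \nu_q$ read off from \eqref{eq:bound:loglikGen}, valid for both $\pi_V$ and $\pi'_V$. This produces the leading factor $\nu_q^{-1}$ present in every summand and leaves us to bound $\absj{\bP_{\pi_V}(X_q|X_{q+1:\nobs})-\bP_{\pi'_V}(X_q|X_{q+1:\nobs})}$. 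Using \eqref{eq:decomp:prod:kernelGen} I would write this conditional probability as $\int \bP_{\pi_V}(\rmd v_{q+1}|X_{q+1:\nobs}) f_{\pi_V}(v_{q+1})$, where $f_{\pi_V}(v_{q+1})=\int \pi_V(\rmd v_q)K_q(X_q,v_q,v_{q+1})$ satisfies $\nu_q\le f_{\pi_V}\le 1$, and likewise with $\pi'_V$.

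Next I would split the difference into $\int\cro{\bP_{\pi_V}(\rmd v_{q+1}|X_{q+1:\nobs})-\bP_{\pi'_V}(\rmd v_{q+1}|X_{q+1:\nobs})}f_{\pi_V}(v_{q+1})$ plus $\int\bP_{\pi'_V}(\rmd v_{q+1}|X_{q+1:\nobs})\cro{f_{\pi_V}(v_{q+1})-f_{\pi'_V}(v_{q+1})}$. The second term is bounded by $\norm{\pi_V-\pi'_V}_{\mathsf{tv}}$ directly, since $0\le K_q\le 1$; this is the $\ell=0$ contribution. For the first term, $\norm{f_{\pi_V}}_{\infty}\le 1$ reduces it to the total variation distance between the two smoothing laws at $v_{q+1}$, which I would unfold recursively. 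Writing $\bP_{\pi_V}(\rmd v_{q+1}|X_{q+1:\nobs})=\int\bP_{\pi_V}(\rmd v_{q+2}|X_{q+1:\nobs})K^{V|X}_{\pi_V,q+1,q+1}(v_{q+2},\rmd v_{q+1})$ and peeling one backward kernel at a time, each step splits into a term where the already-accumulated law-difference is pushed through a common kernel, hence contracted by $(1-\nu_{q+1}),(1-\nu_{q+2}),\dots$ via Lemma~\ref{lem:totalvariationGen}, and a term isolating the perturbation of the prior at the current coordinate, which contributes a fresh $\norm{\pi_V-\pi'_V}_{\mathsf{tv}}$ together with two inverse-$\nu$ normalisation factors from the numerator and denominator of the backward kernel. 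Collecting the contribution created at coordinate $v_{q+\ell}$, the accumulated contraction is exactly $\prod_{k=q+1}^{q+\ell-1}(1-\nu_k)$ and the normalisations give $(\nu_{q+\ell-1}\nu_{q+\ell})^{-1}$; summing over $\ell$ from $0$ to $\nobs+1-q$ yields the stated geometric series, the global factor $2$ coming from the two-term ratio estimate.

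The delicate point, and the step I expect to cost the most care, is the comparison of the two backward kernels $K^{V|X}_{\pi_V,k,q+1}$ and $K^{V|X}_{\pi'_V,k,q+1}$ at a fixed step. By Lemma~\ref{lem:minorizationGen} each is a ratio of $\pi_V$-weighted integrals in which the parameter enters not only through the explicit factor $\pi_V(\rmd v_k)$ but also implicitly through the backward likelihood $\bP_{\pi_V}(X_{q+1:k-1}|v_k)$, so a naive one-step comparison would call for bounding yet another likelihood difference and risks being circular. I would avoid this by performing the telescoping on the un-normalised multilinear form $\bP_{\pi_V}(X_{q:\nobs})=\int\prod_i\pi_V(\rmd v_i)\prod_i K_i(X_i,v_i,v_{i+1})$ and its denominator $\bP_{\pi_V}(X_{q+1:\nobs})$: there the transition kernels $K_i$ carry no dependence on the distribution, so replacing $\pi_V(\rmd v_{q+\ell})$ by $\pi'_V(\rmd v_{q+\ell})$ at a single coordinate is a genuinely clean perturbation, integrated against a product of kernels bounded by $1$. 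The work then reduces to showing that, after dividing by the bounded-below normalising constant, the effect of this single-coordinate perturbation on the conditional probability decays geometrically with its distance $\ell$ to the read-out coordinate $q+1$; this is precisely the loss-of-memory property, which I would quantify by re-expressing the perturbed integrals through the conditional laws and invoking the contraction \eqref{eq:exp:forgettingGen}. The remaining effort is the bookkeeping that matches each coordinate $q+\ell$ with its contraction range and its two normalising $\nu$-factors.
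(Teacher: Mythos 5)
Your proposal is correct and follows essentially the same route as the paper: interpolate between $\pi_V$ and $\pi'_V$ by swapping the prior one coordinate at a time (equivalently, telescoping the unnormalised product form, the paper's $\mathrm{L}_{\ell}$), push each one-coordinate perturbation through the backward kernels — which are common to the two hybrid laws — using the contraction of Lemma~\ref{lem:totalvariationGen}, bound each swap by $2(\nu_{q+\ell-1}\nu_{q+\ell})^{-1}\|\pi_V-\pi'_V\|_{\mathsf{tv}}$ via the ratio estimate on numerator and denominator, and convert probabilities to logarithms at cost $\nu_q^{-1}$. The only deviations are bookkeeping (you apply the $\log$-inequality first rather than last, and peel off the $v_q$-integral before telescoping, where the paper telescopes the conditional probability directly), and you correctly identify and avoid the circularity of comparing backward kernels with different priors, exactly as the paper does.
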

\begin{proof}
When $q=\nobs$,
\[
\bP_{\pi_V}(X_{\nobs}) - \bP_{\pi'_V}(X_{\nobs}) = \int\left\{\pi'^{\otimes 2}_V(\rmd v_{\nobs:\nobs+1})-\pi^{\otimes 2}_V(\rmd v_{\nobs:\nobs+1})\right\}K_{\nobs}(X_{\nobs},v_{\nobs},v_{\nobs+1})\eqsp.
\]
Thus $|\bP_{\pi_V}(X_{\nobs}) - \bP_{\pi'_V}(X_{\nobs})|\le 2 \|\pi_V-\pi'_V\|_{\mathsf{tv}}$.
When $1\le q\le \nobs-1$,
\begin{equation*}
\bP_{\pi_V}(X_{q}|X_{q+1:\nobs}) - \bP_{\pi'_V}(X_{q}|X_{q+1:\nobs}) 
= \sum_{\ell= 0}^{\nobs+1-q}\left\{\bP_{\ell}(X_{q}|X_{q+1:\nobs}) - \bP_{\ell+1}(X_{q}|X_{q+1:\nobs})\right\}\eqsp,
\end{equation*}
where $\bP_{\ell}$ is the joint distribution of $(X_{q:\nobs},V_{q:\nobs+1})$ when $(V_q,\ldots,V_{q+\ell-1})$ are i.i.d. $\pi'_V$ and $(V_{q+\ell},\ldots,V_{\nobs+1})$ are i.i.d. $\pi_V$. The first term in the telescopic sum is given by:
\begin{multline*}
\bP_{0}(X_{q}|X_{q+1:\nobs}) - \bP_{1}(X_{q}|X_{q+1:\nobs}) = \int \bP_{0}\left(\rmd v_{q+1}\middle | X_{q+1:\nobs}\right)\int\pi'_V(\rmd v_q)K_{q}(X_q,v_q,v_{q+1})\\
- \int \bP_{0}\left(\rmd v_{q+1}\middle | X_{q+1:\nobs}\right)\int\pi_V(\rmd v_q)K_{q}(X_q,v_q,v_{q+1})\eqsp,
\end{multline*}
where $\bP_{0}\left(V_{q+1}\middle | X_{q+1:\nobs}\right)$ is the distribution of $V_{q+1}$ conditionally on $X_{q+1:\nobs}$ when $(V_{q},\ldots,V_{\nobs+1})$ are i.i.d. $\pi_V$. As $V_q$ is independent of $(V_{q+1},X_{q+1:\nobs})$, this distribution is the same as the distribution of $V_{q+1}$ conditionally on $X_{q+1:\nobs}$ when $V_q\sim \pi'_V$ and $(V_{q+1},\ldots,V_{\nobs+1})$ are i.i.d. $\pi_V$. 
\[
\absj{\bP_{0}(X_{q}|X_{q+1:\nobs}) - \bP_{1}(X_{q}|X_{q+1:\nobs})}\le\|\pi_V-\pi'_V\|_{\mathsf{tv}}\eqsp.
\]
Then, for all $1\le \ell \le \nobs+2-q$,
\begin{equation*}
\bP_{\ell}\left(X_{q}\middle|X_{q+1:\nobs}\right) = \int \bP_{\ell}\left(\rmd v_{q+\ell}\middle | X_{q+1:\nobs}\right) \pa{\prod_{k = q+1}^{q+\ell-1}K^{V|X}_{\pi'_V,k,q+1}(v_{k+1},\rmd v_k)}\int\pi'_V(\rmd v_q)K_{q}(X_q,v_q,v_{q+1})\eqsp.
\end{equation*}
Therefore, by \eqref{eq:exp:forgettingGen},
\begin{multline*}
\left|\bP_{\ell}\left(X_{q}\middle|X_{q+1:\nobs}\right)-\bP_{\ell+1}\left(X_{q}\middle|X_{q+1:\nobs}\right)\right|\\
\le \left(\prod_{k = q+1}^{q+\ell-1}(1-\nu_k)\right)\norm{\bP_{\ell}\left(V_{q+\ell}\middle | X_{q+1:\nobs}\right)-\bP_{\ell+1}\left(V_{q+\ell}\middle | X_{q+1:\nobs}\right)}_{\mathsf{tv}}\eqsp,
\end{multline*}
where $\bP_{\ell}\left(V_{q+\ell}\middle | X_{q+1:\nobs}\right)$ is the distribution of $V_{q+\ell}$ conditionally on $X_{q+1:\nobs}$ when $(V_q,\ldots,V_{q+\ell-1})$ are i.i.d. $\pi'_V$ and $(V_{q+\ell},\ldots,V_{\nobs+1})$ are i.i.d. $\pi_V$. It remains to show that 
\[
\norm{\bP_{\ell}\left(V_{q+\ell}\middle | X_{q+1:\nobs}\right)-\bP_{\ell+1}\left(V_{q+\ell}\middle | X_{q+1:\nobs}\right)}_{\mathsf{tv}}\le  2(\nu_{q}\nu_{q+\ell-1}\nu_{q+\ell})^{-1} \|\pi_V-\pi'_V\|_{\mathsf{tv}}
\]
which amounts to showing that for all $f$ such that $\norm{f}_\infty\le 1$,
\[
\left|\int f(v_{q+\ell})\left\{\bP_{\ell}\left(\rmd v_{q+\ell}\middle | X_{q+1:\nobs}\right)-\bP_{\ell+1}\left(\rmd v_{q+\ell}\middle | X_{q+1:\nobs}\right)\right\}\right|\le 2(\nu_{q}\nu_{q+\ell-1}\nu_{q+\ell})^{-1} \|\pi_V-\pi'_V\|_{\mathsf{tv}}\eqsp.
\]
Write, for all $1\le \ell \le \nobs+2-q$,
\begin{equation}
\label{eq:defL}
\mathrm{L}_{\ell}(\rmd v,X)= \prod_{m=q+1}^{q+\ell-1}\pi'_V(\rmd v_m)\prod_{m=q+\ell}^{\nobs+1}\pi_V(\rmd v_m)\prod_{m=q+1}^{\nobs}K_m(X_{m},v_{m},v_{m+1})\eqsp.
\end{equation}
We have
\[
\int f(v_{q+\ell})\bP_{\ell}\left(\rmd v_{q+\ell}\middle | X_{q+1:\nobs}\right)=\frac{\int f(v_{q+\ell})L_\ell(\rmd v,X)}{\int L_\ell(\rmd v,X)}\eqsp.
\]
Therefore,
\begin{align*}
 \int f(v_{q+\ell})\left\{\bP_{\ell}\left(\rmd v_{q+\ell}\middle | X_{q+1:\nobs}\right)-\bP_{\ell+1}\left(\rmd v_{q+\ell}\middle | X_{q+1:\nobs}\right)\right\} &\\
 &\hspace{-6cm}=\int f(v_{q+\ell})\pa{\frac{L_\ell(\rmd v,X)}{\int L_\ell(\rmd v,X)}-\frac{L_{\ell+1}(\rmd v,X)}{\int L_{\ell+1}(\rmd v,X)}}\eqsp,\\
 &\hspace{-6cm}= \int f(v_{q+\ell})\frac{L_\ell(\rmd v,X)-L_{\ell+1}(\rmd v,X)}{\int L_\ell(\rmd v,X)}\\
 &\hspace{-2cm}+\int f(v_{q+\ell})\frac{L_{\ell+1}(\rmd v,X)}{\int L_{\ell+1}(\rmd v,X)}\frac{\int \cro{L_{\ell+1}(\rmd v,X)-L_\ell( \rmd v,X)}}{\int L_\ell(\rmd v,X)}\eqsp.
\end{align*}
Thus,
\begin{multline}\label{eq:TV1Gen}
\left|\int f(v_{q+\ell})\left\{\bP_{\ell}\left(\rmd v_{q+\ell}\middle | X_{q+1:\nobs}\right)-\bP_{\ell+1}\left(\rmd v_{q+\ell}\middle | X_{q+1:\nobs}\right)\right\}\right|
\le 2\frac{|\int \{L_\ell(\rmd v, X)-L_{\ell+1}(\rmd v, X)\}|}{\int L_\ell(\rmd v, X)}\eqsp. 
\end{multline}
By \eqref{eq:defL}, $1\le \ell \le \nobs+1-q$,
\begin{multline*}
\left|\int\{L_\ell(\rmd v, X)-L_{\ell+1}(\rmd v, X)\}\right| \\
= \left|\int\prod_{m=q+1}^{q+\ell-1}\pi'_V(\rmd v_m)\left\{\pi_V(\rmd v_{q+\ell})-\pi'_V(\rmd v_{q+\ell})\right\}\prod_{m=q+\ell+1}^{\nobs+1}\pi_V(\rmd v_m)\prod_{m=q+1}^{\nobs}K_m(X_{m},v_{m},v_{m+1})\right|
\end{multline*}
As $K_{q+\ell-1}$ and $K_{q+\ell}$ are upper bounded by 1, 
\begin{multline*}
\left|\int\{L_\ell(\rmd v, X)-L_{\ell+1}(\rmd v, X)\}\right| \le \left(\int\prod_{m=q+1}^{q+\ell-1}\pi'_V(\rmd v_m)\prod_{m=q+1}^{q+\ell-2}K_m(X_{m},v_{m},v_{m+1})\right)\\
\times \norm{\pi_V-\pi'_V}_{\textrm{tv}}\left(\int\prod_{m=q+\ell+1}^{n+1}\pi_V(\rmd v_m)\prod_{m=q+\ell+1}^{n}K_m(X_{m},v_{m},v_{m+1})\right)
\eqsp.
\end{multline*}
Similarly, since $K_{q+\ell-1}$ and $K_{q+\ell}$ are respectively lower bounded by $\nu_{q+\ell-1}$ and $\nu_{q+\ell}$,
\begin{multline*}
\int L_\ell(\rmd v, X) \ge \left(\int\prod_{m=q+1}^{q+\ell-1}\pi'_V(\rmd v_m)\prod_{m=q+1}^{q+\ell-2}K_m(X_{m},v_{m},v_{m+1})\right)\\
\times \nu_{q+\ell-1}\nu_{q+\ell}\left(\int\prod_{m=q+\ell+1}^{n+1}\pi_V(\rmd v_m)\prod_{m=q+\ell+1}^{n}K_m(X_{m},v_{m},v_{m+1})\right) \eqsp.
\end{multline*}
Plugging these bounds in \eqref{eq:TV1Gen} yields, for  $1\le \ell \le \nobs+1-q$,
\begin{equation*}
\left|\int f(v_{q+\ell})\left\{\bP_{\ell}\left(\rmd v_{q+\ell}\middle | X_{q+1:\nobs}\right)-\bP_{\ell+1}\left(\rmd v_{q+\ell}\middle | X_{q+1:\nobs}\right)\right\}\right|\le 2\,(\nu_{q+\ell-1}\nu_{q+\ell})^{-1}\norm{\pi_V-\pi'_V}_{\textrm{tv}}\eqsp. 
\end{equation*}
The proof is completed using the fact that 
for all $x,y>0$, $|\log x - \log y| \le |x-y|/x\wedge y$. 
\end{proof}

\noindent
Lemma~\ref{lem:BoundedDifferenceGen} is a key ingredient to prove bounded difference properties for log-likelihood based processes.
\begin{lemma}\label{lem:BoundedDifferenceGen}
For all $1\le q\le \nobs$ and all $q\le \tilde{q}\le \nobs$, let $\widetilde{X}_{q:\nobs}^{\tilde{q}}$ be  such that $\widetilde{X}_{\tilde{q}}^{\tilde{q}}\in \Xset$ and $\widetilde{X}_k^{\tilde{q}} = X_k$ for all $q\le k\le \nobs$ such that $k\neq \tilde{q}$. 
For any $1\le q\le \tilde{q}\le \nobs$,
\[
\left|\log \bP_{\pi_V}(X_{q}|X_{q+1:\nobs}) - \log \bP_{\pi_V}(\widetilde{X}_{q}^{\tilde{q}}|\widetilde{X}_{q+1:\nobs}^{\tilde{q}})\right| \leq \nu_q^{-1}\prod_{k = q+1}^{\tilde{q}-1}(1-\nu_k)\eqsp.
\]
\end{lemma}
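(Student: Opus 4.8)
\emph{Strategy.} The plan is to mimic the proof of \eqref{eq:bound:forgetGen}: express the conditional likelihood through the backward smoothing decomposition and reduce a change of the single coordinate $X_{\tilde q}$ to a change of one initial law, which is then smoothed out by the geometric contraction of the backward kernels. Integrating out $v_{q+2:\nobs}$ in \eqref{eq:decomp:prod:kernelGen} and recalling that the composition of the backward kernels maps $\bP_{\pi_V}(\rmd v_{\nobs}|X_{q+1:\nobs})$ onto the marginal $\bP_{\pi_V}(\rmd v_{q+1}|X_{q+1:\nobs})$, I would first write
\[
\bP_{\pi_V}(X_q|X_{q+1:\nobs}) = \int g_q(v_{q+1})\,\bP_{\pi_V}(\rmd v_{q+1}|X_{q+1:\nobs})\eqsp,\qquad g_q(v_{q+1}) = \int \pi_V(\rmd v_q)K_q(X_q,v_q,v_{q+1})\eqsp.
\]
By H\ref{assum:min} one has $\nu_q\le g_q\le 1$, hence $\bP_{\pi_V}(X_q|X_{q+1:\nobs})\in[\nu_q,1]$, and the same holds with $X$ replaced by $\widetilde X^{\tilde q}$. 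Since for all $x,y>0$, $|\log x-\log y|\le|x-y|/(x\wedge y)$, it then suffices to bound the difference of the two unlogged conditional likelihoods by $\prod_{k=q+1}^{\tilde q-1}(1-\nu_k)$ and divide by the common lower bound $\nu_q$.

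\emph{Reduction.} When $\tilde q=q$ only $g_q$ is modified while $\bP_{\pi_V}(\rmd v_{q+1}|X_{q+1:\nobs})$ is unchanged, so the difference equals $\int(g_q-\widetilde g_q)\,\bP_{\pi_V}(\rmd v_{q+1}|X_{q+1:\nobs})$ (where $\widetilde g_q$ is built from $\widetilde X_q^{\tilde q}$), bounded by $1-\nu_q\le 1$; this already yields the claimed bound $\nu_q^{-1}$, the product being empty. When $q<\tilde q\le\nobs$, the factor $g_q$ depends only on $X_q$ and is therefore unchanged, so
\[
\bP_{\pi_V}(X_q|X_{q+1:\nobs}) - \bP_{\pi_V}(X_q|\widetilde X_{q+1:\nobs}^{\tilde q}) = \int g_q(v_{q+1})\left\{\bP_{\pi_V}(\rmd v_{q+1}|X_{q+1:\nobs}) - \bP_{\pi_V}(\rmd v_{q+1}|\widetilde X_{q+1:\nobs}^{\tilde q})\right\}\eqsp,
\]
which, since $\|g_q\|_\infty\le1$, is bounded in absolute value by $\|\bP_{\pi_V}(V_{q+1}\in\cdot|X_{q+1:\nobs})-\bP_{\pi_V}(V_{q+1}\in\cdot|\widetilde X_{q+1:\nobs}^{\tilde q})\|_{\mathsf{tv}}$.

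\emph{Key step and conclusion.} The crux is to bound this total variation distance by $\prod_{k=q+1}^{\tilde q-1}(1-\nu_k)$. The point is that, by Lemma~\ref{lem:minorizationGen}, conditionally on the observations $(V_\nobs,\ldots,V_{q+1})$ is a backward Markov chain whose transition kernel $K^{V|X}_{\pi_V,k,q+1}$ depends only on $X_{q+1:k}$; hence for $q+1\le k\le\tilde q-1$ these kernels do not see the modified coordinate $X_{\tilde q}$ and coincide for $X$ and $\widetilde X^{\tilde q}$. Consequently both conditional laws of $V_{q+1}$ are obtained from the respective laws of $V_{\tilde q}$, namely $\bP_{\pi_V}(\rmd v_{\tilde q}|X_{q+1:\nobs})$ and $\bP_{\pi_V}(\rmd v_{\tilde q}|\widetilde X_{q+1:\nobs}^{\tilde q})$, by applying the same product of kernels $K^{V|X}_{\pi_V,\tilde q-1,q+1}\cdots K^{V|X}_{\pi_V,q+1,q+1}$. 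All the dependence on $X_{\tilde q}$ (and on $X_{\tilde q+1:\nobs}$) is thus concentrated in these two initial laws at index $\tilde q$, and the geometric contraction of Lemma~\ref{lem:totalvariationGen}, iterated as in \eqref{eq:exp:forgettingGen}, contracts their (at most $1$) total variation distance by the factor $\prod_{k=q+1}^{\tilde q-1}(1-\nu_k)$. Combining with $|\log x-\log y|\le|x-y|/(x\wedge y)$ and $x\wedge y\ge\nu_q$ completes the proof. The main obstacle is precisely this reduction: one must recognize that isolating the marginal law at the changed time $\tilde q$ (rather than at $\nobs$, as in the proof of \eqref{eq:bound:forgetGen}) makes the lower kernels common, so that only the contraction over the indices $q+1,\ldots,\tilde q-1$ survives.
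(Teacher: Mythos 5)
Your proof is correct and follows essentially the same route as the paper's: both isolate the conditional law at the modified index $\tilde q$, use that the backward kernels $K^{V|X}_{\pi_V,k,q+1}$ for $q+1\le k\le\tilde q-1$ depend only on $X_{q+1:k}$ and hence are common to $X$ and $\widetilde X^{\tilde q}$, apply the contraction of Lemma~\ref{lem:totalvariationGen} iteratively, and conclude with $|\log x-\log y|\le|x-y|/(x\wedge y)$ and the lower bound $\nu_q$. Your phrasing via the bounded function $g_q$ and the marginal of $V_{q+1}$ is only a cosmetic repackaging of the paper's explicit kernel-product decomposition.
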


\begin{proof}
If $q = \tilde q = \nobs$, then 
\begin{align*}
\absj{\bP_{\pi_V}(X_{\nobs}) - \bP_{\pi_V}(\widetilde{X}_{n}^{n})} &=  \absj{\int \pi_V(\rmd v_{\nobs})\pi_V(\rmd v_{\nobs+1})\left\{K_{\nobs}(X_\nobs,v_\nobs,v_{\nobs+1})-K_{\nobs}(\widetilde{X}^{\nobs}_\nobs,v_\nobs,v_{\nobs+1})\right\}}\eqsp,\\
& \le 1-\nu_\nobs\le 1\eqsp.
\end{align*}
Assume now that $1\le q <\nobs$. When $\tilde q = q$,
\begin{multline*}
\bP_{\pi_V}\left(X_{q}\middle|X_{q+1:\nobs}\right) - \bP_{\pi_V}(\widetilde{X}^{q}_{q}|\widetilde{X}^{q}_{q+1:\nobs}) \\
= \int \bP_{\pi_V}\left(\rmd v_{q+1}\middle | \widetilde{X}^{q}_{q+1:\nobs}\right) \pi_V(\rmd v_q)\left\{K_q(X_q,v_q,v_{q+1})-K_q(\widetilde{X}^{q}_q,v_q,v_{q+1})\right\}\eqsp,
\end{multline*}
which ensures that $|\bP_{\pi_V}(X_{q}|X_{q+1:\nobs}) - \bP_{\pi_V}(\widetilde{X}^{q}_{q}|\widetilde{X}^{q}_{q+1:\nobs})|\le 1- \nu_q\le 1$. When $\tilde q \ge q+1$, as for all $q+1\le k\le \tilde{q}-1$ the Markov transition kernel $K^{V|X}_{\pi_V,k,q+1}$ depends only on $\pi_V$, $K_k$ and $X_{q+1:k}$, 
\[
\bP_{\pi_V}\left(\widetilde{X}^{\tilde{q}}_{q}\middle|\widetilde{X}^{\tilde{q}}_{q+1:\nobs}\right) = \int \bP_{\pi_V}\left(\rmd v_{\tilde{q}}\middle | \widetilde{X}^{\tilde{q}}_{q+1:\nobs}\right) \pa{\prod_{k = q+1}^{\tilde{q}-1}K^{V|X}_{\pi_V,k,q+1}(v_{k+1},\rmd v_k)} \pi_V(\rmd v_q)K_q(X_q,v_q,v_{q+1})\eqsp.
\]
By Lemma~\ref{lem:totalvariationGen}, it follows that
\begin{multline*}
\left|\bP_{\pi_V}\left(X_{q}\middle|X_{q+1:\nobs}\right)-\bP_{\pi_V}\left(\widetilde{X}^{\tilde{q}}_{q}\middle|\widetilde{X}^{\tilde{q}}_{q+1:\nobs}\right)\right| \\
\le \left(\prod_{k = q+1}^{\tilde{q}-1}(1-\nu_k)\right) \sup_{v_{q+1}\in\Vset}\left|\int\pi_V(\rmd v_q)K_q(X_q,v_q,v_{q+1})\right|\eqsp.
\end{multline*}
The proof is completed using the fact that 
for all $x,y>0$, $|\log x - \log y| \le |x-y|/x\wedge y$. 
\end{proof}

\noindent
Let $\pi^*_V$ denote a probability distribution on $\Vset$ and let
\[
Z_{\pi_V}(X_{1:\nobs})=\frac1{\nobs}\sum_{q=1}^{\nobs}\cro{\log \bP_{\pi_V}(X_{q}|X_{q+1:\nobs})-\E_{\pi_V^*}\cro{\log \bP_{\pi_V}(X_{q}|X_{q+1:\nobs})}}\eqsp.
\]

\noindent
Lemma~\ref{lem:FGen} shows the concentration of $Z_{\pi_V}(X_{1:\nobs})$ around its expectation.
\begin{lemma}
\label{lem:FGen}
Assume that $K_i=K$ for all $i\in\bZ$, let $\cP$ denote a class of probability distributions on $\Vset$. There exists $c>0$ such that for all $t>0$,
\[
\bP_{\pi^*_V}\left(\left|\sup_{\pi_V\in\cP}\{Z_{\pi_V}(X_{1:\nobs})\}-\E_{\pi^*_V}\cro{\sup_{\pi_V\in\cP}\{Z_{\pi_V}(X_{1:\nobs})\}}\right|\ge c\nu^{-2}\frac{t}{\sqrt{\nobs}}\right)\le 2e^{-t^2}\eqsp.
\]
\end{lemma}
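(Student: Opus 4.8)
The plan is to deduce the statement from a bounded-difference (McDiarmid-type) concentration inequality, exploiting the fact that the randomness driving the chain is i.i.d. Under $\bP_{\pi_V^*}$ the variables $V_1,\ldots,V_{\nobs+1}$ are i.i.d. with law $\pi_V^*$ and, conditionally on $V_{1:\nobs+1}$, the $X_i$ are independent with $X_i\sim K(\cdot,V_i,V_{i+1})$ by \eqref{eq:law:chain}. Since $\Xset$ is discrete, I would realize $X_i=g(V_i,V_{i+1},\xi_i)$ for a measurable $g$ and auxiliary i.i.d. uniform variables $\xi_1,\ldots,\xi_\nobs$ independent of $V_{1:\nobs+1}$. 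Then
\[
F(X_{1:\nobs}) := \sup_{\pi_V\in\cP}\set{Z_{\pi_V}(X_{1:\nobs})}
\]
becomes a measurable function of the $2\nobs+1$ independent variables $V_1,\ldots,V_{\nobs+1},\xi_1,\ldots,\xi_\nobs$, so it suffices to control its bounded differences with respect to each of them (the constant $\E_{\pi_V^*}\cro{\cdot}$ subtracted in $Z_{\pi_V}$ plays no role, as it cancels in any such difference).

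First I would bound the effect on $F$ of modifying a single coordinate $X_{\tilde q}$ of its argument. Fix $\pi_V\in\cP$ and $1\le \tilde q\le \nobs$. The term $\log\bP_{\pi_V}(X_q|X_{q+1:\nobs})$ depends on $X_{\tilde q}$ only for $q\le\tilde q$, and Lemma~\ref{lem:BoundedDifferenceGen} (with the constant rate $\nu_k=\nu$) bounds its variation by $\nu^{-1}(1-\nu)^{(\tilde q-1-q)_+}$. Summing the resulting geometric series over $q$ gives, with $\widetilde X^{\tilde q}_{1:\nobs}$ as in Lemma~\ref{lem:BoundedDifferenceGen},
\[
\absj{Z_{\pi_V}(X_{1:\nobs})-Z_{\pi_V}(\widetilde X^{\tilde q}_{1:\nobs})}\le \frac1{\nobs}\sum_{q=1}^{\tilde q}\nu^{-1}(1-\nu)^{(\tilde q-1-q)_+}\le \frac{2\nu^{-2}}{\nobs}\eqsp.
\]
Crucially this bound is uniform in $\pi_V$, because the minorization $\nu\le K$ in H\ref{assum:min} does not depend on $\pi_V$; hence the elementary inequality $\absj{\sup_a f_a-\sup_a h_a}\le\sup_a\absj{f_a-h_a}$ transfers the same constant $2\nu^{-2}/\nobs$ to the supremum $F$. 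Translating to the driving variables, modifying one $\xi_i$ changes the single coordinate $X_i$, hence $F$ by at most $2\nu^{-2}/\nobs$, whereas modifying one $V_j$ changes at most the two coordinates $X_{j-1}$ and $X_j$, hence $F$ by at most $4\nu^{-2}/\nobs$.

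With these $2\nobs+1$ bounded-difference constants, all at most $4\nu^{-2}/\nobs$, the sum of their squares is at most $(2\nobs+1)(4\nu^{-2}/\nobs)^2\le 48\nu^{-4}/\nobs$. The bounded-difference inequality for independent variables (equivalently, the concentration inequality for the chain $(V_{k+1},X_k)_k$, whose innovations $(V_{k+1},\xi_k)$ are i.i.d.) then gives, for every $s>0$,
\[
\bP_{\pi_V^*}\pa{\absj{F(X_{1:\nobs})-\E_{\pi_V^*}\cro{F(X_{1:\nobs})}}\ge s}\le 2\exp\pa{-\frac{s^2\nobs\nu^4}{24}}\eqsp.
\]
Taking $s=c\nu^{-2}t/\sqrt{\nobs}$ with $c=\sqrt{24}$ makes the exponent equal to $-t^2$, which is exactly the claimed inequality.

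The delicate point is the exact power of $\nu$. The geometric factor $(1-\nu)^{(\tilde q-1-q)_+}$ supplied by Lemma~\ref{lem:BoundedDifferenceGen} already encodes the loss of memory of the conditional chain, and summing it produces one of the two factors $\nu^{-1}$, the other coming from the prefactor $\nu^{-1}$. It is therefore essential to route the argument through the i.i.d. innovations and apply a plain bounded-difference inequality: feeding the $X$-chain into a generic Markov-chain concentration bound would reintroduce a mixing constant of order $\nu^{-1}$ and degrade the rate from $\nu^{-2}$ to $\nu^{-3}$. A minor secondary issue is the measurability of the supremum over the possibly uncountable class $\cP$, which is handled by the continuity of $\pi_V\mapsto Z_{\pi_V}$ together with separability of $\cP$ for $d$.
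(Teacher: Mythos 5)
Your proof is correct, and up to the concentration step it coincides with the paper's: the paper also perturbs one coordinate $X_{\tilde q}$ at a time, invokes Lemma~\ref{lem:BoundedDifferenceGen}, and sums the geometric series to obtain a separate-boundedness constant of order $\nu^{-2}/\nobs$, uniformly over $\pi_V\in\cP$. The divergence is in how this bounded-difference property is converted into concentration: the paper feeds the stationary chain $(V_{k+1},X_k)_k$ into the subgaussian bounded-difference inequality for geometrically ergodic Markov chains of Dedecker and Gou\"ezel, whereas you lift the problem to the $2\nobs+1$ independent innovations $(V_{1:\nobs+1},\xi_{1:\nobs})$ --- via the quantile representation $X_i=g(V_i,V_{i+1},\xi_i)$, available because $\Xset$ is discrete --- and apply plain McDiarmid, paying only a factor $2$ on the constants of the $V$-coordinates since each $V_j$ touches at most the two coordinates $X_{j-1}$ and $X_j$. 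What each route buys: the paper's needs no auxiliary randomization and would apply even without such a driving representation of the conditional laws; yours is entirely elementary and produces an explicit, genuinely absolute constant $c=\sqrt{24}$. That last point is not cosmetic: the constant in the Dedecker--Gou\"ezel theorem depends on the ergodicity parameters of the chain, which here are governed by $\nu$, so the paper's argument, read literally, yields a constant $c$ with a hidden $\nu$-dependence --- exactly the degradation you warn about in your final paragraph. Downstream this is harmless (in Theorem~\ref{th:risk} the dominant factor $\varepsilon^{-6n^2}$ comes from the increment bound of Lemma~\ref{lem:inrcZGen}), but your argument is the one that actually delivers the clean $\nu^{-2}$ stated in the lemma.
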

\begin{proof}
The proof relies on the bounded difference inequality for Markov chains \cite[Theorem 0.2]{dedecker:gouezel:2015}. 
To apply this result, $\sup_{\pi_V\in\cP}\{Z_{\pi_V}(X_{1:\nobs})\}$ has to be separately bounded. For all $1\le q\le\nobs$ and all $q\le \tilde{q}\le \nobs$, let $\widetilde{X}^{\tilde{q}}_{1:\nobs}$ such that $\widetilde{X}_{\tilde{q}}^{\tilde{q}}\in \Xset$ and $\widetilde{X}_k^{\tilde{q}} = X_k$ for all $1\le k\le \nobs$ such that $k\neq \tilde{q}$. 
Then, 
\begin{align*}
 |\sup_{\pi_V\in\cP}\set{Z_{\pi_V}(X_{1:\nobs})}-&\sup_{\pi_V\in\cP}\{Z_{\pi_V}(\widetilde{X}^{\tilde{q}}_{1:\nobs})\}|\\
 &\le\sup_{\pi_V\in\cP}\absj{\frac1{\nobs}\sum_{q=1}^{\nobs}\cro{\log \bP_{\pi_V}(X_{q}|X_{q+1:\nobs})-\log \bP_{\pi_V}(\widetilde{X}^{\tilde{q}}_{q}|\widetilde{X}^{\tilde{q}}_{q+1:\nobs})}}\\
 &\le\sup_{\pi_V\in\cP}\absj{\frac1{\nobs}\sum_{q=1}^{\tilde{q}}\cro{\log \bP_{\pi_V}(X_{q}|X_{q+1:\nobs})-\log \bP_{\pi_V}(\widetilde{X}^{\tilde{q}}_{q}|\widetilde{X}^{\tilde{q}}_{q+1:\nobs})}}\eqsp.
\end{align*}
By Lemma~\ref{lem:BoundedDifferenceGen}, for any distribution $\pi_V\in\cP$ and any $1\le q\le \nobs$,
\[
\absj{\frac1{\nobs}\sum_{q=1}^{\nobs}\cro{\log \bP_{\pi_V}(X_{q}|X_{q+1:\nobs})-\log \bP_{\pi_V}(\widetilde{X}^{\tilde{q}}_{q}|\widetilde{X}^{\tilde{q}}_{q+1:\nobs})}} \leq \frac1{\nobs}\sum_{q=1}^{\tilde{q}}\nu^{-1}(1-\nu)^{\tilde{q}-q-1}\eqsp.
\]
Hence, there exists $c>0$ such that,
\[
 |\sup_{\pi_V\in\cP}\set{Z_{\pi_V}(X_{1:\nobs})}-\sup_{\pi_V\in\cP}\{Z_{\pi_V}(\widetilde{X}^{\tilde{q}}_{1:\nobs})\}|\le \frac{c}{\nu^2\nobs}\eqsp.
\]
The proof is concluded by \cite[Theorem 0.2]{dedecker:gouezel:2015}.
\end{proof}

\noindent
Lemma~\ref{lem:inrcZGen} shows the subgaussian concentration inequality of the increments of $Z_{\pi_V}(X_{1:\nobs})$.
\begin{lemma}
\label{lem:inrcZGen}
Assume that $K_i=K$ for all $i\in\bZ$, let $\pi_V$, $\pi'_V$ denote two probability distributions on $\Vset$. 
 Then, there exists $c>0$ such that for all $n\ge 1$, $t>0$,
\begin{equation}\label{eq:ConcIncrementsGen}
 \bP_{\pi_V^*}\pa{\absj{\sqrt{\nobs}\left\{Z_{\pi_V}(X_{1:\nobs}) -Z_{\pi'_V}(X_{1:\nobs})\right\}}>t}\le \exp\cro{-\frac{t^2}{\pa{c\nu^{-5}d(\pi,\pi')}^2}}\eqsp.
\end{equation}
\end{lemma}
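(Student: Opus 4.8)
The plan is to recognize $\sqrt{\nobs}\{Z_{\pi_V}(X_{1:\nobs})-Z_{\pi'_V}(X_{1:\nobs})\}$ as a centered additive functional of the underlying Markov chain and to apply the bounded difference inequality for Markov chains \cite[Theorem 0.2]{dedecker:gouezel:2015}, exactly as in the proof of Lemma~\ref{lem:FGen}. Writing
\[
g_q = \log \bP_{\pi_V}(X_{q}|X_{q+1:\nobs}) - \log \bP_{\pi'_V}(X_{q}|X_{q+1:\nobs})\eqsp,
\]
we have $\sqrt{\nobs}\{Z_{\pi_V}-Z_{\pi'_V}\} = \nobs^{-1/2}\sum_{q=1}^{\nobs}\{g_q - \E_{\pi^*_V}[g_q]\}$, so the quantity in \eqref{eq:ConcIncrementsGen} is already centered and equals its own deviation from the mean. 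The task then reduces to controlling, uniformly in $\tilde q$, the separate bounded difference $c_{\tilde q}$ of $F(X_{1:\nobs}):=\nobs^{-1/2}\sum_q g_q$ when a single coordinate $X_{\tilde q}$ is replaced by $\widetilde{X}^{\tilde q}_{\tilde q}$, the $V$-components playing no role since $F$ does not depend on them.

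Since $g_q$ depends only on $X_{q:\nobs}$, only the terms $q\le \tilde q$ are affected, and I would bound each $\absj{g_q(X)-g_q(\widetilde{X}^{\tilde q})}$ in two complementary ways. First, applying Lemma~\ref{lem:BoundedDifferenceGen} to both $\pi_V$ and $\pi'_V$ with $\nu_k=\nu$ gives the \emph{geometric} bound $2\nu^{-1}(1-\nu)^{\tilde q-q-1}$, which decays as $q$ moves away from $\tilde q$ but is insensitive to $\norm{\pi_V-\pi'_V}_{\mathsf{tv}}$. Second, Lemma~\ref{lem:IncrementsGen} under Assumption~\eqref{Hyp:Stat}, after summing the geometric series, yields the \emph{uniform} bound $\absj{g_q}\le C\nu^{-4}\norm{\pi_V-\pi'_V}_{\mathsf{tv}}$, hence $\absj{g_q(X)-g_q(\widetilde{X}^{\tilde q})}\le 2C\nu^{-4}\norm{\pi_V-\pi'_V}_{\mathsf{tv}}$, which is small when the measures are close but does not decay in $q$. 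Taking the minimum and summing,
\[
c_{\tilde q}\le \frac1{\sqrt{\nobs}}\sum_{m=0}^{\tilde q-1}\min\!\pa{2\nu^{-1}(1-\nu)^{m-1},\,2C\nu^{-4}\norm{\pi_V-\pi'_V}_{\mathsf{tv}}}\eqsp.
\]

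The crux, and the step I expect to be the main obstacle, is evaluating this sum so that the distance $d$ of \eqref{eq:def:d} emerges \emph{exactly}. Writing $\delta=\norm{\pi_V-\pi'_V}_{\mathsf{tv}}$, the two bounds cross at an index $m^\star$ of order $\nu^{-1}\log(1/\delta)$: summing the constant bound over $m\le m^\star$ contributes $\lesssim \nu^{-5}\delta\log(1/\delta)$, while the geometric tail over $m> m^\star$ contributes $\lesssim \nu^{-5}\delta$. This produces $c_{\tilde q}\le C'\nu^{-5}\nobs^{-1/2}\,d(\pi,\pi')$ uniformly in $\tilde q$, with the logarithmic factor appearing precisely in the regime $\delta\le e^{-1}$ and disappearing for $\delta\ge e^{-1}$, matching the two branches of \eqref{eq:def:d}. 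One must keep the cases $\delta\le e^{-1}$ and $\delta\ge e^{-1}$ separate and check that $\delta\le \delta\log(1/\delta)$ on the first branch, while on the second the pure geometric sum $\lesssim \nu^{-2}$ is dominated by $\nu^{-5}\delta$; tracking the constants carefully through the crossing argument is where the work lies.

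Finally, I would conclude as in Lemma~\ref{lem:FGen}: from $\sum_{\tilde q=1}^{\nobs}c_{\tilde q}^2\le C'^2\nu^{-10}\,d(\pi,\pi')^2$, the Markov-chain bounded difference inequality \cite[Theorem 0.2]{dedecker:gouezel:2015} delivers a subgaussian tail of scale $c\nu^{-5}d(\pi,\pi')$, which is exactly \eqref{eq:ConcIncrementsGen}. The only additional point to verify is the geometric mixing required by that theorem, which is furnished by the uniform minorization $\nu\le K\le 1$ and the ensuing geometric ergodicity of $(V_{k+1},X_k)_{k\ge0}$ noted after \eqref{eq:law:chain}.
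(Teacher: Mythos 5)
Your proposal is correct and follows the paper's own proof essentially step for step: the same two complementary bounds (Lemma~\ref{lem:IncrementsGen} for the uniform bound proportional to $\|\pi_V-\pi'_V\|_{\mathsf{tv}}$, Lemma~\ref{lem:BoundedDifferenceGen} applied to both $\pi_V$ and $\pi'_V$ for the geometric decay in $\tilde q-q$), the same minimum-and-sum argument whose crossing point produces the distance $d(\pi,\pi')$, and the same conclusion via the bounded-difference inequality for Markov chains of \cite[Theorem 0.2]{dedecker:gouezel:2015}. The only difference is presentational: you spell out the crossing computation and the two cases $\|\pi_V-\pi'_V\|_{\mathsf{tv}}\le e^{-1}$ and $\|\pi_V-\pi'_V\|_{\mathsf{tv}}\ge e^{-1}$, which the paper compresses into the single displayed inequality bounding $\sum_{q=1}^{\tilde q}\left[\|\pi_V-\pi'_V\|_{\mathsf{tv}}\wedge(1-\nu)^{\tilde q-q}\right]$ by $\nu^{-1}d(\pi,\pi')$.
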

\begin{proof}
To prove that the increments $Z_{\pi_V}-Z_{\pi'_V}$ are separately bounded, consider, for all $1\le \tilde{q}\le \nobs$,  $\widetilde{X}^{\tilde{q}}_{1:\nobs}$ such that $\widetilde{X}_{\tilde{q}}^{\tilde{q}}\in \Xset$ and $\widetilde{X}_k^{\tilde{q}} = X_k$ for all $1\le k\le \nobs$ such that $k\neq \tilde{q}$. Then, by Lemma~\ref{lem:BoundedDifferenceGen},
\begin{align*}
\absj{Z_{\pi_V}(X_{1:\nobs}) -Z_{\pi_V}(\widetilde{X}^{\tilde{q}}_{1:\nobs})}&= \absj{\frac1{\nobs}\sum_{q=1}^{\nobs}\cro{\log \bP_{\pi_V}(X_{q}|X_{q+1:\nobs})-\log \bP_{\pi_V}(\widetilde X^{\tilde q}_{q}|\widetilde{X}^{\tilde q}_{q+1:\nobs})}}\eqsp,\\
&\le \frac1{\nobs}\sum_{q=1}^{\tilde q}\absj{\log \bP_{\pi_V}(X_{q}|X_{q+1:\nobs})-\log \bP_{\pi_V}(\widetilde X^{\tilde q}_{q}|\widetilde{X}^{\tilde q}_{q+1:\nobs})}\eqsp.
\end{align*}
On one hand, by Lemma~\ref{lem:IncrementsGen},
\begin{align*}
\absj{\log \bP_{\pi_V}(X_{q}|X_{q+1:\nobs}) - \log \bP_{\pi'_V}(X_{q}|X_{q+1:\nobs})}\le  2\nu^{-4}\|\pi_V-\pi'_V\|_{\mathsf{tv}}\eqsp.
\end{align*}
On the other hand, by Lemma~\ref{lem:BoundedDifferenceGen}, for any $1\le q\le \tilde{q}\le\nobs$,
\[
\left|\log \bP_{\pi_V}(X_{q}|X_{q+1:\nobs}) - \log \bP_{\pi_V}(\widetilde{X}_{q}^{\widetilde{q}}|\widetilde{X}_{q+1:\nobs}^{\widetilde{q}})\right| \leq \nu^{-1}(1-\nu)^{\tilde{q}-q-1}\eqsp.
\]
Thus,
\begin{multline*}
 \absj{\pa{Z_{\pi_V}(X_{1:\nobs}) -Z_{\pi'_V}(X_{1:\nobs})}-\pa{Z_{\pi_V}(\widetilde{X}^{\tilde{q}}_{1:\nobs})-Z_{\pi'_V}(\widetilde{X}^{\tilde{q}}_{1:\nobs})}}\\
\le\frac{2\nu^{-4}}{\nobs}\sum_{q=1}^{\tilde{q}}\cro{\|\pi_V-\pi'_V\|_{\mathsf{tv}}\wedge(1-\nu)^{\tilde{q}-q}}\le \frac{2\nu^{-5}}{\nobs}d(\pi,\pi')\eqsp.
\end{multline*}
Eq~\eqref{eq:ConcIncrementsGen} follows by plugging these bounded differences properties in \cite[Theorem 0.2]{dedecker:gouezel:2015}.
\end{proof}

\section{Proofs of the main results}
\label{sec:MainProofs}
 When H\ref{assum:strongmix} holds and $E=E^{n,N}_{\text{RR}}$, $(V_{2: \ordermax{}{E}}^{E},X_{2:\ordermax{}{E}-1}^{E})$ satisfies the assumptions of Section~\ref{sec:ProbTools} with 
 \[
 \pi_V=\pi^{\otimes n-1},\qquad K_i(X_{i}^{E},V^{E}_i,V^{E}_{i+1})=\prod_{X_{i,j}\in X_{i}^{E}}\condlik(X_{i,j},V_i,V_j),\qquad \nu_i=\varepsilon^{|X_{i}^{E}|}\eqsp.
 \]
 Moreover, it is proved in Section~\ref{sec:round:robin} that $\absj{X^{E}_{q}} = n(n-1)$ for $2 \le q\le \ordermax{}{E}-1$, 
 which implies that 
\begin{equation}\label{eq:LBnu}
  \nu_i\ge\varepsilon^{n^2}\eqsp.
\end{equation}

Throughout the proofs, the following conventions are used. For all $0 \le k \le \ordermax{}{E}$,
\[
v^{E}_k\in \cV^{|V_k^{E}|},\qquad \pi(\rmd v^{E}_{k})=\prod_{i : V_i\in V_k^{E}}\pi(\rmd v_i)\eqsp.
\]

\subsection{Proof of Theorem~\ref{thm:ConvVrais}}\label{ProofThm2}
The first inequality is a direct conclusion of Lemma~\ref{lem:exp:forgetGen}. The proof of the second inequality follows the same lines. Then, the log-likelihood is decomposed as follows
\begin{align}
\notag\log \likelihood{E}{\pi}\left(X^{E}\right) &= \log \likelihood{E}{\pi}\left(X_{2:\ordermax{}{E}-1}^{E}\right) + \log \likelihood{E}{\pi}\left(X_{0}^{E},X_{1}^{E},X_{\ordermax{}{E}}^{E}\middle|X_{2:\ordermax{}{E}-1}^{E}\right)\eqsp,\\
\label{eq:DecVrais}&= \sum_{q=2}^{\ordermax{}{E}-1}\!  \log \likelihood{E}{\pi}\!\left(X_{q}^{E}\middle|X_{q+1:\ordermax{}{E}-1}^{E}\right) + \log \likelihood{E}{\pi}\!\left(Z^{E}\middle|X_{2:\ordermax{}{E}-1}^{E}\right) \eqsp.
\end{align}
Let us first bound from above the last term in \eqref{eq:DecVrais}. 
\begin{align*}
\likelihood{E}{\pi}\left(Z^{E}\middle|X_{2:\ordermax{}{E}-1}^{E}\right) &= \int \likelihood{E}{\pi}\left(Z^{E},\rmd v_{0:2}^{E},\rmd v_{\ordermax{}{E}:\ordermax{}{E}+1}^{E}\middle|X_{2:\ordermax{}{E}-1}^{E}\right)\eqsp,\\
&= \int \likelihood{E}{\pi}\left(\rmd v_{0:2}^{E},\rmd v_{\ordermax{}{E}:\ordermax{}{E}+1}^{E}\middle|X_{2:\ordermax{}{E}-1}^{E}\right)\left\{\prod_{X_{i,j}\in Z^{E}}\condlik(X_{i,j},v_{i},v_j)\right\}\eqsp,
\end{align*}
By Assumption~\ref{assum:strongmix}
\begin{equation}\label{Eq:BoundCoeffRestes}
\varepsilon^{3n^2}\le \likelihood{E}{\pi}\left(Z^{E}\middle|X_{2:\ordermax{}{E}-1}^{E}\right)  \le 1\eqsp. 
\end{equation}
In particular, the last term in \eqref{eq:DecVrais} is $O(1)$ when $N$ grows to infinity. On the other hand,taking the limit as $\ell\to\infty$ in Lemma~\ref{lem:exp:forgetGen} and recalling that $\nu_i\geqslant \varepsilon^{n^2}$, see \eqref{eq:LBnu}, for any $\pi\in \Pi$,
\begin{equation}
\label{eq:linfty}
\frac{1}{\ordermax{}{E}}\sum_{q=2}^{\ordermax{}{E}-1} \left| \log \likelihood{E}{\pi}\left(X_{q}^{E}\middle|X_{q+1:\ordermax{}{E}-1}^{E}\right) - \ell_{\pi}^{n}(\shift^q{\bf X^{n}})\right|\le 
\frac{1}{\ordermax{}{E}}\sum_{q=2}^{\ordermax{}{E}-1}\frac{(1-\varepsilon^{n^2})^{q_E-q-2}}{\varepsilon^{n^2}}\le\frac{\varepsilon^{- 3n^2}}{\ordermax{}{E}}
\eqsp.
\end{equation}
%
By \eqref{eq:bound:loglikGen}, $|\ell_{\pi}^{n}({\bf X^{n}})|\le n^2 \log(\varepsilon^{-1})$, thus $\ell_{\pi}^{n}$ is integrable. Therefore, the ergodic theorem \cite[Theorem~24.1]{billingsley:1995} can be applied to $\sum_{q=2}^{\ordermax{}{E}-1}\ell_{\pi}^{n}(\shift^q{\bf X^{n}})/\ordermax{}{E}$ and \eqref{eq:ApproxRisk} follows.

\subsection{$R_{\bayes}$ is the excess risk function}
The following result shows that $R^n_{\bayes}$ is a non-negative function.
\begin{proposition}
\label{prop:max:likelihood}
For all $\pi\in\Pi$ and all $n\ge 1$, $R^n_{\bayes}(\pi)\ge 0$.
\end{proposition}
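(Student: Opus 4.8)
The plan is to write $R^n_{\bayes}(\pi)$ as a limit of relative entropies between the marginal laws of the observations, and then to invoke the classical information (Gibbs) inequality.

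First I would use the $\mathrm{L}^1(\Pstar)$ convergence in \eqref{eq:ApproxRisk}, applied to both $\pi$ and $\bayes$, to pass expectations through the limit: since $\mathrm{L}^1$ convergence of a sequence to a constant forces convergence of its means,
\[
\kullback^n_{\bayes}(\pi)=\lim_{N\to\infty}\frac{1}{\ordermax{n}{N}}\E_{\bayes}\cro{\log\likelihood{n,N}{\pi}(X^{n,N})}\eqsp,
\]
and likewise for $\bayes$. No $\log 0$ can occur here: under H\ref{assum:strongmix} one has $\likelihood{n,N}{\pi}(X^{n,N})\ge \varepsilon^{\card E^{n,N}_{\text{RR}}}>0$ (the weights $v_i$ lie in $\mathrm{supp}(\pi)$ $\pi^{\otimes N}$-a.s., where $\condlik\ge\varepsilon$), so both expectations are finite. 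Subtracting the two identities gives
\[
R^n_{\bayes}(\pi)=\lim_{N\to\infty}\frac{1}{\ordermax{n}{N}}\E_{\bayes}\cro{\log\frac{\likelihood{n,N}{\bayes}(X^{n,N})}{\likelihood{n,N}{\pi}(X^{n,N})}}\eqsp.
\]

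Next I would identify each term of this sequence as a Kullback--Leibler divergence. Under $\Pstar$ the observation $X^{n,N}$ has marginal law $x^{n,N}\mapsto\likelihood{n,N}{\bayes}(x^{n,N})$ on the discrete product space $\cX^{\card E^{n,N}_{\text{RR}}}$, obtained by integrating \eqref{eq:jointlaw} in $v$. Hence for each fixed $N$,
\[
\E_{\bayes}\cro{\log\frac{\likelihood{n,N}{\bayes}(X^{n,N})}{\likelihood{n,N}{\pi}(X^{n,N})}}=\sum_{x^{n,N}}\likelihood{n,N}{\bayes}(x^{n,N})\log\frac{\likelihood{n,N}{\bayes}(x^{n,N})}{\likelihood{n,N}{\pi}(x^{n,N})}=\mathrm{KL}\pa{\likelihood{n,N}{\bayes}\,\big\|\,\likelihood{n,N}{\pi}}\eqsp,
\]
the relative entropy between the two marginal distributions of the observations. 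This quantity is finite (by the strict positivity lower bound above, which also guarantees absolute continuity) and non-negative by Gibbs' inequality, equivalently by Jensen's inequality applied to the convex function $-\log$.

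Finally, since every term of the sequence whose limit equals $R^n_{\bayes}(\pi)$ is non-negative, the limit is non-negative, which yields $R^n_{\bayes}(\pi)\ge 0$. I do not expect a genuine obstacle: the only point requiring care is the interchange of limit and expectation, and that is granted for free by the $\mathrm{L}^1$ part of Theorem~\ref{thm:ConvVrais}; the positivity itself is then just the classical information inequality.
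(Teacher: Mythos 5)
Your proof is correct, but it follows a genuinely different route from the paper's. The paper never touches the full normalized likelihood: it works with the single conditional block, writing $\kullback^n_{\bayes}(\pi) = \lim_{N\to\infty}\E_{\bayes}\bigl[\log \likelihood{n,N}{\pi}(X_{2}^{n,N}|X_{3:\ordermax{n}{N}-1}^{n,N})\bigr]$ via the almost sure convergence \eqref{eq:bound:loglik} together with Lebesgue's bounded convergence theorem (the domination coming from the uniform bound $|\log \likelihood{n,N}{\pi}(X_{2}^{n,N}|X_{3:\ordermax{n}{N}-1}^{n,N})|\le n^2\log(\varepsilon^{-1})$ of Lemma~\ref{lem:exp:forgetGen}), and then conditions on $X_{3:\ordermax{n}{N}-1}^{n,N}$ so that $R^n_{\bayes}(\pi)$ appears as a limit of expected \emph{conditional} Kullback--Leibler divergences. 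You instead use the $\mathrm{L}^1(\Pstar)$ part of \eqref{eq:ApproxRisk}, which lets you exchange limit and expectation with no domination argument at all, and you identify each pre-limit term as $\ordermax{n}{N}^{-1}$ times the \emph{global} KL divergence between the two marginal laws of $X^{n,N}$; positivity is then Gibbs' inequality on a discrete space. Your route is the more classical ``Kullback contrast'' argument and is arguably leaner, since the interchange of limit and expectation is free; the paper's conditional-block version avoids the normalization and produces the limit directly in terms of the limit object $\ell^n_\pi$, in the style of the hidden Markov model literature it builds on. One shared caveat: both you and the paper apply the convergence statements of Theorem~\ref{thm:ConvVrais} to $\pi=\bayes$, although the theorem is literally stated for $\pi\in\Pi$; this is harmless (H\ref{assum:strongmix} and all the supporting lemmas of Section~\ref{sec:ProbTools} cover $\Pi\cup\{\bayes\}$), so it is not a gap in your argument any more than in theirs.
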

\begin{proof}
Let $\pi\in\Pi$ and $n\ge 1$. By \eqref{eq:bound:loglik},
\[
\kullback^n_{\bayes}(\pi) = \E_{\bayes}\left[\lim_{N\to \infty}\log \likelihood{E}{\pi}(X_{2}^{E}|X_{3:\ordermax{}{E}-1}^{E})\right]\eqsp.
\]
By Lebesgue's bounded convergence theorem 
\begin{align*}
\kullback^n_{\bayes}(\pi) &= \lim_{N\to \infty}\E_{\bayes}\left[\log \likelihood{E}{\pi}(X_{2}^{E}|X_{3:\ordermax{}{E}-1}^{E})\right]\\
&= \lim_{N\to \infty}\E_{\bayes}\left[\E_{\bayes}\left[\log \likelihood{E}{\pi}(X_{2}^{E}|X_{3:\ordermax{}{E}-1}^{E})\middle|X_{3:\ordermax{}{E}-1}^{E}\right]\right]\eqsp.
\end{align*}
Therefore, 
\begin{align*}
R^n_{\bayes}(\pi) &=  \lim_{N\to \infty}\left\{\E_{\bayes}\left[\E_{\bayes}\left[\log \likelihood{E}{\bayes}(X_{2}^{E}|X_{3:\ordermax{}{E}-1}^{E}) - \log \likelihood{E}{\pi}(X_{2}^{E}|X_{3:\ordermax{}{E}-1}^{E})\middle|X_{3:\ordermax{}{E}-1}^{E}\right]\right]\right\}\eqsp,
\end{align*}
and the latter is non negative since the term in the expectation is a Kullback-Leibler divergence.
\end{proof}

\subsection{Proof of Theorem~\ref{th:risk}}
\label{sec:proofs:risk}
As that for any $\pi\in\Pi\cup\{\bayes\}$, $\Lo{E}{\pi} = \log \likelihood{E}{\pi}(X^{E})$, the excess loss satisfies:
\begin{align*}
R^{n}_{\bayes}(\MLE^{E}) =&\;\kullback^n_{\bayes}(\bayes)-\E_{\bayes}\left[\frac{1}{\ordermax{}{E}}\Lo{E}{\bayes}\right]+\E_{\bayes}\left[\frac{1}{\ordermax{}{E}}\Lo{E}{\bayes}\right]-\frac{1}{\ordermax{}{E}}\Lo{E}{\bayes}\\
&+\frac{1}{\ordermax{}{E}}\Lo{E}{\bayes}-\frac{1}{\ordermax{}{E}}\Lo{E}{\MLE^{E}}+\frac{1}{\ordermax{}{E}}\Lo{E}{\MLE^{E}}-\E_{\bayes}\left[\frac{1}{\ordermax{}{E}}\Lo{E}{\MLE^{E}}\right]\\
&+\E_{\bayes}\left[\frac{1}{\ordermax{}{E}}\Lo{E}{\MLE^{E}}\right]-\kullback^n_{\bayes}(\MLE^{E})\eqsp.
\end{align*}
By definition $\Lo{E}{\bayes}-\Lo{E}{\MLE^{E}}\le 0$. Thus,
\begin{align*}
R^{n}_{\bayes}(\MLE^{E}) 
&\le 2\;\sup_{\pi\in\Pi\cup\{\pi^*\}}\set{\left|\kullback^{\bayes}(\pi)-\frac{\E_{\bayes}\left[\Lo{E}{\pi}\right]}{\ordermax{}{E}}\right|
+\left|\frac{1}{\ordermax{}{E}}\E_{\bayes}\left[\Lo{E}{\pi}\right]-\frac{\Lo{E}{\pi}}{\ordermax{}{E}}\right|}\eqsp.
\end{align*}
For all $\pi\in\Pi$, as, for any $q\in \mathbb{Z}$, $\E_{\bayes}\left[\ell_{\pi}^{n}({\bf X^{n}})\right]=\E_{\bayes}\left[\ell_{\pi}^{n}(\shift^q{\bf X^{n}})\right]$,
\[
\kullback^{\bayes}(\pi)=\frac{1}{\ordermax{}{E}}\E_{\bayes}\left[\sum_{q=2}^{\ordermax{}{E}-1}\ell_{\pi}^{n}(\shift^q{\bf X^{n}})\right]+\frac{1}{\ordermax{}{E}}\E_{\bayes}\left[2\ell_{\pi}^{n}({\bf X^{n}})\right]\eqsp.
\]
Moreover, if $Z^{E} = X_{0}^{E}\cup X_{1}^{E}\cup X_{\ordermax{}{E}}^{E}$,
\[
\Lo{E}{\pi} = \log \likelihood{E}{\pi}(X^{E})=\sum_{q=2}^{\ordermax{}{E}-1} \log \likelihood{E}{\pi}\left( X_{q}^{E}\middle| X_{q+1:\ordermax{}{E}-1}^{E}\right)+\log \likelihood{E}{\pi}\left(Z^{E}\middle|X_{2:\ordermax{}{E}-1}^{E}\right)\eqsp.
\]
Therefore,
\begin{multline*}
\left|\kullback^{\bayes}(\pi)-\frac{\E_{\bayes}\left[\Lo{E}{\pi}\right]}{\ordermax{}{E}}\right| 
\le \frac{1}{\ordermax{}{E}} \E_{\bayes}\left[\sum_{q=2}^{\ordermax{}{E}-1}\left|\ell_{\pi}^{n}(\shift^q{\bf X^{n}})- \log \likelihood{E}{\pi}\left( X_{q}^{E}\middle| X_{q+1:\ordermax{}{E}-1}^{E}\right)\right|\right]\\
+ \frac{1}{\ordermax{}{E}} \E_{\bayes}\left[\left|2\ell_{\pi}^{n}({\bf X^{n}})\right|+ \left|\log \likelihood{E}{\pi}\left(Z^{E}\middle|X_{2:\ordermax{}{E}-1}^{E}\right)\right|\right]\eqsp.
\end{multline*}
Then, by \eqref{eq:linfty}, \eqref{eq:bound:loglikGen} and \eqref{Eq:BoundCoeffRestes} and the inequality $x\leq e^x$, there exists $c$ such that:
\[
\sup_{\pi\in\Pi\cup\{\pi^*\}}\left|\kullback^{\bayes}(\pi)-\frac{\E_{\bayes}\left[\Lo{E}{\pi}\right]}{\ordermax{}{E}}\right|\le \frac{c\varepsilon^{-3n^2}}{\ordermax{}{E}}\eqsp.
\]
This yields:
\begin{equation*}
 R^{n}_{\bayes}(\MLE^{E}) \le  \frac{c\varepsilon^{-3n^2}}{\ordermax{}{E}} + 2\,\sup_{\pi\in\Pi\cup\{\pi^*\}}\left|\frac{1}{\ordermax{}{E}}\E_{\bayes}\left[\Lo{E}{\pi}\right]-\frac{1}{\ordermax{}{E}}\Lo{E}{\pi}\right|\eqsp,
\end{equation*}
and therefore, by \eqref{Eq:BoundCoeffRestes},
\begin{equation}
\label{eq:risk:bound}
R^{n}_{\bayes}(\MLE^{E}) \le  \frac{c\varepsilon^{-3n^2}}{\ordermax{}{E}} + 2\,\sup_{\pi\in\Pi\cup\{\pi^*\}}\left|Z_{\pi_V}\right|\eqsp,
\end{equation}
where 
\[
Z_{\pi}=\frac1{\ordermax{}{E}}\sum_{q=2}^{\ordermax{}{E}-1}\cro{\log \likelihood{E}{\pi}(X^{E}_{q}|X^{E}_{q+1:\ordermax{}{E}})-\E_{\bayes}\cro{\log \likelihood{E}{\pi}(X^{E}_{q}|X^{E}_{q+1:\ordermax{}{E}})}}\eqsp.
\]
Lemma~\ref{lem:FGen} applies by assumption H\ref{assum:strongmix} since $E=E^{n,N}_{\text{RR}}$, therefore, there exists $c>0$ such that,
\begin{equation}\label{eq:ConcSup}
\bP_{\bayes}\pa{\absj{\sup_{\pi\in\Pi\cup\{\pi^*\}}Z_{\pi}-\E_{\bayes}\cro{\sup_{\pi\in\Pi\cup\{\pi^*\}}Z_{\pi}}}>c\varepsilon^{-2n^2}\frac{t}{\sqrt{\ordermax{}{E}}}}\le e^{-t^2},\qquad \forall t>0\eqsp.
\end{equation}
Furthermore, by Lemma~\ref{lem:inrcZGen}, the increments of $Z_\pi$ have subgaussian tails.
\[
\bP_{\bayes}\pa{\sqrt{\ordermax{}{E}}\absj{Z_{\pi}-Z_{\pi'}}>t}\le \exp\pa{-\frac{t^2}{\pa{c\varepsilon^{-5n^2}d(\pi^{\otimes |V_2^{E}|},(\pi')^{\otimes |V_2^{E}|})}^2}},\qquad \forall t>0\eqsp. 
\]
Now it is easy to check that
\[
\norm{\pi^{\otimes |V_2^{E}|}-(\pi')^{\otimes |V_2^{E}|}}_{{\sf tv}}\le |V_2^{E}|\norm{\pi-\pi'}_{{\sf tv}}\eqsp.
\]
Therefore, $d(\pi^{\otimes |V_2^{E}|},(\pi')^{\otimes |V_2^{E}|})\le cn^2d(\pi,\pi')\le c\varepsilon^{-n^2}d(\pi,\pi')$, thus
\begin{equation}\label{eq:subGaussIncr}
\bP_{\bayes}\pa{\sqrt{\ordermax{}{E}}\absj{Z_{\pi}-Z_{\pi'}}>t}\le \exp\pa{-\frac{t^2}{\pa{c\varepsilon^{-6n^2}d(\pi,\pi')}^2}},\qquad \forall t>0\eqsp. 
\end{equation}
Then, by Dudley's entropy bound, see \cite{MR0220340} or \cite[Proposition 2.1]{Talagrand:2014},
\begin{equation}\label{eq:Dudley}
\E_{\bayes}\cro{\sup_{\pi\in\Pi\cup\{\bayes\}}Z_{\pi}(X^{E})}\le \frac{ce^{-6n^2}}{\sqrt{\ordermax{}{E}}}\int_0^{+\infty}\sqrt{\log \mathsf{N}(\Pi\cup\{\bayes\},d,\epsilon)}\rmd\epsilon\eqsp. 
\end{equation}
Plugging \eqref{eq:ConcSup} and \eqref{eq:Dudley} into \eqref{eq:risk:bound} concludes the proof.

\bibliographystyle{plain}
\bibliography{bradleyterrybib}

\end{document}